\newtheorem{theorem}{Theorem}
\newtheorem{corollary}{Corollary}
\newtheorem{propos}{Proposition}
\newtheorem{remark}{Remark}
\newtheorem{defin}{Definition}
\theoremstyle{definition}
\DeclareMathOperator{\supp}{supp}
\def\fg{{\mathbb N}}
\begin{document}
\title{Duality problem for disjointly homogeneous rearrangement invariant spaces}
\thanks{{\rm *}The author has been supported by the Ministry of Education and Science 
of the Russian Federation (project 1.470.2016/1.4) and by the RFBR grant 18--01--00414.}
\author[Astashkin]{Sergey V. Astashkin}
\address[Sergey V. Astashkin]{Department of Mathematics, Samara National Research University, Moskovskoye shosse 34, 443086, Samara, Russia
}
\email{\texttt{astash56@mail.ru}}
\maketitle

\vspace{-7mm}

\begin{abstract}
Let $1\le p<\infty$. A Banach lattice $E$ is said to be disjointly homogeneous (resp.  $p$-disjointly homogeneous) if two arbitrary normalized disjoint sequences from $E$ contain equivalent in $E$ subsequences (resp. every normalized disjoint sequence contains a subsequence equivalent in $E$ to the unit vector basis of $l_p$). Answering a question raised in the paper \cite{FHSTT}, for each $1<p<\infty$, we construct a reflexive $p$-disjointly homogeneous rearrangement invariant space on $[0,1]$ whose dual is not  disjointly homogeneous. Employing methods from interpolation theory, we provide new examples of disjointly homogeneous rearrangement invariant spaces; in particular, we show that there is a Tsirelson type disjointly homogeneous rearrangement invariant space, which contains no subspace isomorphic to $l_p$, $1\le p<\infty$, or $c_0$.
\end{abstract}

\footnotetext[1]{2010 {\it Mathematics Subject Classification}:  46E30, 46B70, 46B42.}
\footnotetext[2]{\textit{Key words and phrases}: rearrangement invariant space, disjointly homogeneous lattice, Lions-Peetre interpolation spaces, basis, block basis, dual space, complemented subspace}

\section{\protect \medskip Introduction}

A Banach lattice $E$ is called {\it disjointly homogeneous} (shortly DH) if two arbitrary normalized disjoint sequences in $E$ contain equivalent subsequences. In particular, given $1\le p\le\infty$, a Banach lattice $E$ is  {\it $p$-disjointly homogeneous} (shortly p-DH) if each  normalized disjoint sequence in $E$ has a subsequence equivalent to the unit vector basis of $l_p$ ($c_0$ when $p=\infty$). These notions were first introduced in \cite{FTT-09} and proved to be very useful in studying the general problem of identifying Banach lattices $E$ such that the ideals of strictly singular and compact operators bounded in $E$ coincide \cite{bib:04} (see also survey \cite{FHT-survey} and references therein). Results obtained there can be treated as a continuation and development of a classical theorem of V.~D.~Milman \cite{Mil-70} which states that every strictly singular operator in $L_p(\mu)$ has compact square. This is a motivation to find out how large is the class of DH Banach lattices. As it is shown in the above cited papers, it contains $L_p(\mu)$-spaces, 
$1 \le p \le\infty$, Lorentz function spaces $L_{q,p}$ and $\Lambda(W, p)$, a certain class of Orlicz function spaces, Tsirelson space and some others. 

The next step in the case of rearrangement invariant (in short, r.i.) function spaces was undertaken in the paper \cite{A15}. By the complex method of interpolation, it was proved there that for every $1 \le p \le\infty$ and any 
increasing concave function $\varphi$ on $[0,1]$, which is not equivalent to neither $1$ nor $t$, there exists a $p$-DH r.i. space on $[0,1]$ with the fundamental 
function $\varphi$ (see definitions in the next section). Observe that there is the only r.i. space on $[0,1]$, $L_\infty$ (resp. $L_1$), having the fundamental function equivalent to $1$ (resp. $t$). 
Moreover, in \cite[Theorem~4]{A15} it is obtained the following sharp version of the classical Levy's result \cite{Levy}
for sequences of pairwise disjoint functions. If $X_0$ and $X_1$ are r.i. spaces such that $X_0$ is strictly embedded into $X_1$,
then every sequence $\{x_n\}_{n=1}^\infty$ of normalized pairwise disjoint functions from the real interpolation space $(X_0,X_1)_{\theta,p}$, $0<\theta<1$, $1\le p<\infty$,
$\|x_n\|_{(X_0,X_1)_{\theta,p}}=1$, $n=1,2,\dots$, contains a subsequence equivalent to the unit vector basis of $l_p$. 

Here, we continue the above direction of research considering a special case of the real method of interpolation, which was introduced and studied by Lions and Peetre 
\cite{LP-64} (see also \cite[2g]{LTbook2}). While parameters of the functors $(\cdot,\cdot)_{\theta,p}$, $0<\theta<1$, $1\le p<\infty$, are only weighted $l_p$-spaces, the 
interpolation spaces from \cite{LP-64} are generated by arbitrary Banach spaces having a normalized 1-unconditional basis $\{e_n\}$. It turns out that in this case there is still a direct link between some properties of block bases of $\{e_n\}$ and of sequences of pairwise disjoint functions from the respective interpolation space,
which allows to construct r.i. spaces with a certain prescribed lattice structure.
This applies not only to the equivalence of subsequences but also to their complementability. As was shown in the paper \cite{FHSTT}, DH properties of
Banach lattices are closely connected with the following concept. A Banach lattice $E$ is called {\it disjointly complemented (DC)}
if every disjoint sequence from $E$ contains a subsequence complemented in $E$.

The above approach based on using tools from interpolation theory allows to construct new examples of $p$-DH and DH r.i. spaces. 
In particular, we show that there is a Tsirelson type DH and DC r.i. space, which contains no subspace isomorphic to $l_p$, $1\le p<\infty$, or $c_0$ (Theorem~\ref{Th5}). Moreover, we solve the following duality problem posed in \cite[p.~5877]{FHSTT}: Is there a reflexive $p$-DH r.i. space on $[0,1]$ whose dual is not DH 
(see also Question~3 in the survey \cite{FHT-survey})? Answering this question, we construct such a space in Theorem~\ref{main}. We believe that the approach developed here is interesting in its own and may be useful in solving other problems related to the study of lattice properties of r.i. spaces.

In the concluding part of the paper, we show that the existence of sequences of
equimeasurable pairwise disjoint functions in the case of infinite measure leads to the essential difference
of DH and DC properties of r.i. spaces on $(0,\infty)$ and $[0,1]$. As was shown in \cite[Theorem~5.2]{FHSTT}, for each $1<p<\infty$ there is a $p$-DH Orlicz space on $(0,\infty)$ whose dual is not DH. In fact, by using known results on subspaces generated  by translations in r.i. spaces due to Hernandez and 
Semenov \cite{HS}, we are able to give a characterization of $L_p$-spaces via DH (resp. DC) property (Theorem~\ref{main2}). Namely, 
we show that if $X$ is a reflexive r.i. space 
on $(0,\infty)$ such that $X\ne L_p$ for every $1<p<\infty$, then at least one of the spaces $X$ or $X^*$ is not DH.

\section{\protect Preliminaries}\label{Prel}

\subsection{\protect Banach lattices and rearrangement invariant spaces}
Let $I = [0, 1]$ or $[0, \infty)$. A {\it Banach function  lattice} $E = (E, \|\cdot\|)$ on $I$ is a Banach space of real--valued Lebesgue measurable functions (of equivalence classes) defined 
on $I$, which satisfies the so--called ideal property: if $f$ is a measurable function, $|f| \leq |g|$ almost everywhere (a.e.) with respect to the Lebesgue measure on $I$ and $g \in E$, then $ f\in E$ and $\|f\|_E \leq \|g\|_E$.  

If $E$ is a Banach lattice on $I$, then the {\it K{\"o}the dual space} (or {\it associated space}) $E^{\prime}$ consists of all real--valued  measurable functions $f$ such that 
\begin{equation} \label{dual}
\|f\|_{E^\prime} := \sup_{g \in E, \, \|g\|_{E} \leq 1} \int_{I} |f(x) g(x) | \, dx<\infty.
\end{equation}
The K{\"o}the dual $E^{\prime}$ is a Banach lattice. Moreover, $E \subset E^{\prime \prime}$ and the equality $E = E^{\prime \prime}$ holds isometrically if and only if the norm in $E$ has the {\it Fatou property}, meaning that the conditions $0 \leq f_{n} \nearrow f$ a.e. on $I$ and $\sup_{n \in {\bf N}} \|f_{n}\|_E < \infty$ imply that $f \in E$ and $\|f_{n}\|_E \nearrow \|f\|_E$. For a separable Banach lattice $E$ the K{\"o}the dual $E^{\prime}$ and the (Banach) dual space $E^{*}$ coincide. Moreover, a Banach lattice $E$ with the Fatou property is reflexive if and only if both $E$ and its K\"othe dual $E^{\prime}$ are separable.

Let $E$ be a Banach lattice on $I$. A function $f\in E$ is said to have an {\it order continuous norm} in $E$ if for any
decreasing sequence of Lebesgue measurable sets $B_{n} \subset I $ with $m(\cap_{n=1}^{\infty} B_n) = 0$, where 
$m$ is the Lebesgue measure, we have $\|f \chi_{B_{n}} \|_E \rightarrow 0$ as $n \rightarrow \infty$. The set of all functions 
in $E$ with order continuous norm is denoted by $E_0$. A Banach lattice $E$ on $I$ is separable if and only if $E_0 = E$. 

A Banach lattice $X$ on $I$ is said to be a {\it rearrangement invariant (in short, r.i.)} (or {\it symmetric}) space  
if from the conditions: functions $x(t)$ and $y(t)$ are {\it equimeasurable}, i.e.,
$$
m\{t\in I:\,|x(t)|>\tau\}=m\{t\in I:\,|y(t)|>\tau\}\;\;\mbox{for all}\;\;\tau>0,$$
and $y\in X$ it follows $x\in X$ and $\|x\|_X=\|y\|_X$. 

In particular, every measurable on $I$ function $x(t)$ is equimeasurable with the non-increasing, right-continuous rearrangement of $|x(t)|$ given by
$$
x^*(t):=\inf\{~\tau>0:\,m\{s\in I:\,|x(s)|>\tau\}\le t~\},\quad t>0.$$

If $X$ is a r.i. space on $I$, then the K{\"o}the dual $X^{\prime}$ is also r.i. In what follows, as in \cite{LTbook2}, we suppose that every r.i. space is either separable or maximal, i.e., $X=X^{\prime\prime}$.

The {\it fundamental function} $\phi_X$ of a r.i. space $X$ is defined as 
\begin{equation*}
\phi_X(t):=\|\chi_{[0,t]}\|_X, ~t>0,
\end{equation*}
where $\chi_B$, throughout, will denote the characteristic function of a set $B.$ The function $\phi_X$ is quasi-concave, that is, it is nonnegative and increases, $\phi_X(0)=0$, and the function $\phi_X(t)/t$ decreases.  

For any r.i. space  $X$ on $[0,1]$ we have $L_\infty [0,1]\subseteq X\subseteq L_1[0,1]$. In the case when $X\ne L_\infty[0,1]$ the space $X_0$ is r.i. and it coincides with the closure of $L_\infty$ in $X$ (the {\it separable part} of $X$). Next, we will repeatedly use the fact that the conditional expectation generated by a $\sigma$-algebra of measurable subsets of $[0,1]$ is a projection of norm 1 in every r.i. space on $[0,1]$ \cite[Theorem~2.a.4]{LTbook2}.

An important example of r.i. spaces are the Orlicz spaces. Let $\Phi$ be an increasing convex function on $[0, \infty)$ such that $\Phi (0) = 0$. Denote by $L_{\Phi}(I)$ the {\it Orlicz space}  
on $I$ (see e.g. \cite{KR61}) endowed with the Luxemburg--Nakano norm
$$
\| f \|_{L_{\Phi}} = \inf \{\lambda > 0 \colon \int_I \Phi(|f(x)|/\lambda) \, dx \leq 1 \}.
$$
In particular, if $\Phi(u)=u^p$, $1\le p<\infty$, we obtain $L_p(I)$. 

Similarly, one can define Banach lattices and r.i. sequence spaces (i.e., on $I = \mathbb N$ with the counting measure) and all the above notions. 

For general properties of Banach lattices and r.i. spaces we refer to the books \cite{KA77}, \cite{AB85}, \cite{LTbook2}, \cite{KPS}, \cite{BS88} and  \cite{A-17}. 

\subsection{\protect \medskip Interpolation spaces}

Recall that a pair $(A_0,A_1)$ of Banach spaces is called a {\it
Banach couple} if $A_0$ and $A_1$ are both linearly and continuously
embedded in some Hausdorff linear topological vector space. In particular, arbitrary Banach lattices $E_0$ and $E_1$ on $I$ form a Banach couple because of every such a lattice is continuously embedded into the space of all measurable a.e. finite functions on $I$ equipped with the convergence in measure on the sets of finite measure.

A Banach space $A$ is called {\it interpolation} with respect to the couple $(A_0,A_1)$ if $A_0 \cap A_1 \subset A \subset A_0+A_1$ and each linear operator $T:\,A_0+A_1\to A_0+A_1$, which is bounded in $A_0$ and in $A_1$, is bounded in $A$.  

For a Banach couple $(A_0,A_1)$ we can define {\it the intersection}
$A_0\cap A_1$ and {\it the sum} $A_0+A_1$ as the Banach spaces with
the natural norms: $\|a\|_{A_0\cap
A_1}=\max\left\{\|a\|_{A_0}\,,\,\|a\|_{A_1}\right\}$ and
$\|a\|_{A_0+A_1}=k(1,1,a;A_0,A_1),$ where $k(\alpha,\beta,a;A_0,A_1)$ is the {\it Peetre ${\mathcal K}$--functional}, i.e.,
$$
k(\alpha,\beta,a;A_0,A_1):=\inf\{\alpha\|a_0\|_{A_0}+\beta\|a_1\|_{A_1}:\,a=a_0+a_1,a_i\in{A_i},
i=0,1\}$$ 
for any $a\in {A_0+A_1}$ and $\alpha,\beta>0.$

Let $E$ be a Banach space with an 1-unconditional normalized basis $\{e_n\}_{n=1}^\infty$ and let $(\alpha_n)_{n=1}^\infty$ and $(\beta_n)_{n=1}^\infty$ be two sequences of positive numbers such that
$$
\sum_{n=1}^\infty \min(\alpha_n,\beta_n)<\infty.$$
Following \cite{LP-64} (see also \cite[Definition~2.g.3]{LTbook2}), for every Banach couple $(A_0,A_1)$ we define the {\it Lions-Peetre space} $W_E^K(A_0,A_1,(\alpha_n),(\beta_n))$ as the set of all elements $a\in A_0+A_1$, for which the series
$$
\sum_{n=1}^\infty k(\alpha_n,\beta_n,a;A_0,A_1)e_n$$
converges in $E$, and we set
$$
\|a\|_{W_E^K}:=\Big\|\sum_{n=1}^\infty k(\alpha_n,\beta_n,a;A_0,A_1)e_n\Big\|_E.$$

It is known \cite[Proposition~2.g.4]{LTbook2} that $W_E^K(A_0,A_1,(\alpha_n),(\beta_n))$ is an interpolation Banach space with respect to the couple $(A_0,A_1)$. In particular, the following continuous embeddings hold:
$$A_0\cap A_1\subset W_E^K(A_0,A_1,(\alpha_n),(\beta_n))\subset A_0+A_1.$$ 
We shall concern with the case when $\alpha_n=m_n^{-1}$ and $\beta_n=m_n$, $n=1,2,\dots$, where $(m_n)_{n=1}^\infty$ is any fixed increasing sequence such that $m_1\ge 2$ and
\begin{equation}\label{EQ2}
m_n^{-1}\sum_{i=1}^{n-1} m_i+m_n\sum_{i=n+1}^{\infty} m_i^{-1}<2^{-n-1},\;\;n=1,2,\dots
\end{equation}
(by convention, $\sum_{i=1}^{0}=0)$.

Let $A$ be a Banach space, $a_n\in A$, $n=1,2,\dots$ We shall denote by $[a_n]$ the closed linear span of a sequence $\{a_n\}$ in $A$. This sequence will be called {\it complemented} if $[a_n]$ is a complemented subspace in $A$. Moreover, if $A^*$ is the dual space for $A$, the value of a functional $a^*\in A^*$ at an element $a\in A$ will be denoted by $\langle a,a^*\rangle$. In particular, if $E'$ is the K{\"o}the dual to a Banach function lattice $E$ on $I$ (resp. Banach sequence lattice $E$), $x(t)\in E$, $y(t)\in E'$ (resp. $x=(x_k)_{k=1}^\infty\in E$, $y=(y_k)_{k=1}^\infty\in E'$), we have $\langle x,y\rangle=\int_I x(t)y(t)\,dt$ (resp. $\langle x,y\rangle=\sum_{k=1}^\infty x_ky_k$). Finally, the notation   $F\asymp G$ will mean that there exist constants $C>0$ and $c>0$ not depending on the arguments of the expressions $F$ and $G$ such that $c{\cdot}F\le G\le C{\cdot}F$.

\section{Lattice properties of interpolation spaces}\label{LP}

In what follows we assume that $X_0$ and $X_1$ are r.i. spaces, 
$X_0\subset X_1$ and this embedding is strict, that is, for each sequence $\{x_n\}_{n=1}^\infty\subset X_0$ such that $\sup_{n=1,2,\dots}\|x_n\|_{X_0}<\infty$ and $m( \supp\,x_n)\to 0$ we have $\|x_n\|_{X_1}\to 0$ as $n\to\infty$. 

Additionally, without loss of generality, we shall assume that 
\begin{equation}\label{EQ0}
\|x\|_{X_1}\le \|x\|_{X_0}\;\;\mbox{for all}\;\;x\in X_0
\end{equation}
and
\begin{equation}\label{EQ1}
\phi_{X_0}(1)=\phi_{X_1}(1)=1.
\end{equation}


In the next two propositions the spaces $X_0$, $X_1$ and $E$ will be fixed and so, for brevity, we set $W:=W_{E}^K(X_0,X_1)$.
In the proof of the first of them we make use of an idea from the proof of Proposition~3.b.4 in \cite{LTbook1}.

\begin{propos}\label{Th0}
There exists a sequence $(A_n)_{n=1}^{\infty}$ of pairwise disjoint measurable subsets of $[0,1]$ such that the sequence
$\left\{\frac{\chi_{A_n}}{\|\chi_{A_n}\|_{W}}\right\}_{n=1}^\infty$
is equivalent to the basis $\{e_n\}_{n=1}^{\infty}$ of the space $E$.
 \end{propos}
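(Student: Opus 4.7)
The plan is to construct the sets $A_n$ so that the Peetre $K$-functional of $\chi_{A_n}$ changes slope \emph{precisely} at the grid point $(\alpha_n,\beta_n)=(m_n^{-1},m_n)$; then $\chi_{A_n}$ behaves like an ``eigenvector'' of the $K$-functor and its $W$-norm concentrates on the $e_n$ coordinate. The main analytic input is the estimate
\begin{equation*}
k(\alpha,\beta,\chi_A;X_0,X_1)\asymp\min\bigl(\alpha\phi_{X_0}(m(A)),\,\beta\phi_{X_1}(m(A))\bigr),
\end{equation*}
valid uniformly in measurable $A\subset[0,1]$. The upper bound is immediate from the decompositions $\chi_A=\chi_A+0=0+\chi_A$. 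For the lower bound I would pass from any $\chi_A=a_0+a_1$ to $a_0':=\min(|a_0|,\chi_A)$, $a_1':=\chi_A-a_0'$, both nonnegative, supported in $A$, and dominated pointwise by $|a_0|,|a_1|$; splitting according to whether $m(\{a_0'\ge 1/2\})\ge m(A)/2$ and using quasi-concavity of $\phi_{X_i}$ (which gives $\phi_{X_i}(s/2)\ge\phi_{X_i}(s)/2$) produces a universal constant.

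Granted this, strictness of $X_0\subset X_1$ implies $\phi_{X_0}(s)/\phi_{X_1}(s)\to\infty$ as $s\to 0^+$, so by continuity one can pick a strictly decreasing null sequence $(s_n)$ with $\phi_{X_0}(s_n)\asymp m_n^2\,\phi_{X_1}(s_n)$ and $\sum_n s_n\le 1$. Take pairwise disjoint $A_n\subset[0,1]$ with $m(A_n)=s_n$ and set
\begin{equation*}
\lambda_n:=m_n^{-1}\phi_{X_0}(s_n)\asymp m_n\phi_{X_1}(s_n).
\end{equation*}
By construction the switching inside the minimum above occurs at $(m_n^{-1},m_n)$, so for every $k$
\begin{equation*}
k(m_k^{-1},m_k,\chi_{A_n};X_0,X_1)\asymp\frac{m_{\min(k,n)}}{m_{\max(k,n)}}\,\lambda_n.
\end{equation*}

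It remains to compare $W$-norms with $E$-norms on a finite block $f=\sum_j c_j\chi_{A_j}$. For the lower bound, the ideal property of $X_0$ and $X_1$ shows that any decomposition of $f$ restricts on $A_k$ to a decomposition of $c_k\chi_{A_k}$, hence $k(m_k^{-1},m_k,f;X_0,X_1)\ge k(m_k^{-1},m_k,c_k\chi_{A_k};X_0,X_1)\gtrsim|c_k|\lambda_k$, and monotonicity of the $1$-unconditional norm on $E$ gives $\|f\|_W\gtrsim\|\sum_j c_j\lambda_j e_j\|_E$. For the upper bound, subadditivity of the $K$-functional together with the previous display yields
\begin{equation*}
k(m_k^{-1},m_k,f;X_0,X_1)\lesssim|c_k|\lambda_k+\sum_{j\ne k}|c_j|\lambda_j\,\frac{m_{\min(k,j)}}{m_{\max(k,j)}}.
\end{equation*}
Plugging into $\|\cdot\|_E$, swapping the order of summation, and using $\|e_k\|_E=1$ with the triangle inequality bounds the $j$-th contribution to the cross term by $|c_j|\lambda_j\bigl(m_j^{-1}\sum_{k<j}m_k+m_j\sum_{k>j}m_k^{-1}\bigr)<|c_j|\lambda_j\cdot 2^{-j-1}$ by (\ref{EQ2}). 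Since $|c_j|\lambda_j\le\|\sum_i c_i\lambda_i e_i\|_E$ by $1$-unconditionality and $\sum_j 2^{-j-1}<\infty$, the cross term is absorbed into the main one, so $\|f\|_W\asymp\|\sum_j c_j\lambda_j e_j\|_E$. Taking $f=\chi_{A_n}$ gives $\|\chi_{A_n}\|_W\asymp\lambda_n$, and rescaling by $\|\chi_{A_n}\|_W$ yields the claimed equivalence with $\{e_n\}$.

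The delicate point, as far as I can see, is the $K$-functional calculus for $\chi_A$ in a general r.i.\ couple (the reduction to nonnegative decompositions supported in $A$ followed by the half-and-half dichotomy); after that, everything is routine bookkeeping driven by the decay condition (\ref{EQ2}) and the $1$-unconditional structure of $\{e_n\}$.
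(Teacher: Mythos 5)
Your argument is correct and follows the same overall architecture as the paper's proof: compute $k(m_n^{-1},m_n,\chi_A;X_0,X_1)$ in terms of the fundamental functions, choose $m(A_n)$ so that the breakpoint of the minimum sits exactly at the $n$-th grid point $(m_n^{-1},m_n)$, and let condition \eqref{EQ2} absorb all off-diagonal contributions. The one genuine divergence is in how the $K$-functional of a characteristic function is computed. The paper obtains the \emph{exact} identity $k(m_n^{-1},m_n,\chi_A)=\min\{m_n^{-1}\phi_{X_0}(t),m_n\phi_{X_1}(t)\}$, $t=m(A)$, by applying the conditional expectation $S_A$ associated with the partition $\{A,[0,1]\setminus A\}$ --- a norm-one projection in every r.i. space --- to an arbitrary decomposition $\chi_A=a_0+a_1$, which reduces the infimum to scalar decompositions $\chi_A=\alpha\chi_A+(1-\alpha)\chi_A$. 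Your route via the truncation $a_0'=\min(|a_0|,\chi_A)$, $a_1'=\chi_A-a_0'$ and the dichotomy on $m(\{a_0'\ge 1/2\})$ gives only a two-sided bound with constant $\tfrac14$, but it is correct (one checks $a_0'\le|a_0|$ and $a_1'\le|a_1|$ pointwise, then uses $\phi_{X_i}(s/2)\ge\phi_{X_i}(s)/2$), more elementary in that it avoids the norm-one expectation operator, and entirely sufficient since the rest of the argument is up to constants anyway. Two small points you should still write out rather than assert: (i) the existence of $s_n$ solving $\phi_{X_0}(s_n)=m_n^2\phi_{X_1}(s_n)$ rests on $\phi_{X_1}(t)/\phi_{X_0}(t)\to 0$ as $t\to 0^+$, which is exactly where strictness of the embedding $X_0\subset X_1$ enters; and (ii) the requirement $\sum_n s_n\le 1$ is not automatic from that choice --- it follows from the bound $s_n\le m_n^{-2}$, which the paper derives from quasi-concavity of $\phi_{X_1}$ together with the normalization $\phi_{X_0}(1)=\phi_{X_1}(1)=1$ in \eqref{EQ1}; without this check the pairwise disjoint sets $A_n$ need not fit inside $[0,1]$.
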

\begin{proof}
For each $n\in\mathbb{N}$ we define on $X_1$ the norm $\|\cdot\|_n$ by
$$
\|x\|_n:=k(x,m_n^{-1},m_n).$$
We claim that for any measurable set $A\subset [0,1]$, with $m(A)=t$, it holds
\begin{equation}\label{EQ3}
\|\chi_A\|_n=\min\{m_n^{-1}\phi_{X_0}(t),m_n\phi_{X_1}(t)\}.
\end{equation}

Since the inequality $\le $ is obvious, it suffices to prove the opposite one. Recall that the expectation operator corresponding to the partition $\{A,[0,1]\setminus A\}$ of $[0,1]$, i.e., the operator
$$
S_Ax(t):=\frac{1}{m(A)}\int_A x(s)\,ds\cdot \chi_A(t)+\frac{1}{m([0,1]\setminus A)}\int_{[0,1]\setminus A} x(s)\,ds\cdot \chi_{[0,1]\setminus A}(t)$$
is bounded in every r.i. space with the norm 1 (see Section~\ref{Prel} or \cite[Theorem~2.a.4]{LTbook2}).
Therefore, the value of norm $\|\chi_A\|_n$ can be computed by using only  decompositions of $\chi_A$ of the form
$$
\chi_A=\alpha \chi_A+ (1-\alpha)\chi_A,\;\;0\le\alpha\le 1.$$
Thus,
$$
\|\chi_A\|_n=\inf_{0\le\alpha\le 1}(\alpha m_n^{-1}\|\chi_A\|_{X_0}+(1-\alpha) m_n\|\chi_A\|_{X_1})\ge \min\{m_n^{-1}\phi_{X_0}(t),m_n\phi_{X_1}(t)\},$$
and therefore \eqref{EQ3} follows.

Further, since the embedding $X_0\subset X_1$ is strict,
we have
$$
\lim_{t\to 0}\frac{\phi_{X_1}(t)}{\phi_{X_0}(t)}=0.$$
Therefore, due to the continuity of the fundamental functions $\phi_{X_0}$ and $\phi_{X_1}$ and equation \eqref{EQ1}, for each $n\in\mathbb{N}$ we can find $t_n\in (0,1)$ such that
$$
\frac{\phi_{X_1}(t_n)}{\phi_{X_0}(t_n)}=m_n^{-2}.$$
Observe that $t_n\le m_n^{-2}$. Indeed, assuming that $t_n> m_n^{-2}$, by the quasi-concavity of the function $\phi_{X_1}(t)$, we infer
$$
\phi_{X_1}(t_n)\ge t_n\phi_{X_1}(1)>m_n^{-2}\phi_{X_0}(1)\ge m_n^{-2}\phi_{X_0}(t_n),$$
which contradicts the choice of $t_n$. 
Hence, from the inequality $m_n\ge 2$ and \eqref{EQ2} it follows
$$
\sum_{n=1}^\infty t_n\le \sum_{n=1}^\infty m_n^{-2}<1,$$
and so we can fix pairwise disjoint measurable subsets
$A_k\subset [0,1]$, $m(A_k)=t_k$, $k=1,2,\dots$, such that (see \eqref{EQ3} and \eqref{EQ2})
$$
\sup_{k=1,2,\dots}\|\chi_{A_n}\|_k=\|\chi_{A_n}\|_n\;\;\mbox{for all}\;\;n,k\in\mathbb{N}.$$
Show that the norms $\|\chi_{A_n}\|_k$, $k\ne n$, are negligible comparatively with the norm $\|\chi_{A_n}\|_n$.

First, if $1\le k<n$, then by \eqref{EQ3} we have
\begin{equation}\label{EQ4}
\|\chi_{A_n}\|_k\le m_k\phi_{X_1}(t_n)=\frac{m_k}{m_n}m_n\phi_{X_1}(t_n)=\frac{m_k}{m_n}\|\chi_{A_n}\|_n.
\end{equation}
Similarly, in the case when $k>n$ we obtain
\begin{equation}\label{EQ5}
\|\chi_{A_n}\|_k\le m_k^{-1}\phi_{X_0}(t_n)=\frac{m_n}{m_k}\|\chi_{A_n}\|_n.
\end{equation}
Combining these estimates with \eqref{EQ2}, we deduce 
$$
\sum_{k\ne n}\|\chi_{A_n}\|_k\le\Big(m_n^{-1}\sum_{k=1}^{n-1} m_k+m_n\sum_{k=n+1}^{\infty} m_k^{-1}\Big)\|\chi_{A_n}\|_n\le 2^{-n-1}\|\chi_{A_n}\|_n.$$
Since the basis $\{e_n\}_{n=1}^\infty$ is normalized and 1-unconditional, the latter estimate with the definition of the norm in $W$ guarantee that for every $n=1,2,\dots$ 
\begin{equation}\label{EQ6}
\|\chi_{A_n}\|_n\le \|\chi_{A_n}\|_W\le (1+2^{-n-1})\|\chi_{A_n}\|_n.
\end{equation}

Setting $u_j:=\frac{\chi_{A_j}}{\|\chi_{A_j}\|_j}$, $j=1,2,\dots$, in view of \eqref{EQ4}, \eqref{EQ5} and the left-hand side of \eqref{EQ6} we obtain
$$
\|u_j\|_n\le\min\left(\frac{m_j}{m_n},\frac{m_n}{m_j}\right),\;\;j\ne n.$$
This and inequality \eqref{EQ2} imply
$$
\sum_{j\ne n}\|u_j\|_n\le m_n^{-1}\sum_{i=1}^{n-1} m_i+m_n\sum_{i=n+1}^{\infty} m_i^{-1}<2^{-n-1},$$
which yields 
$$
\Big\|\sum_{j=1}^\infty c_ju_j\Big\|_n-|c_n|=\Big\|\sum_{j=1}^\infty c_ju_j\Big\|_n-\|c_nu_n\|_n\le \sum_{j\ne n}|c_j|\|u_j\|_n\le 2^{-n-1}\sup_{j=1,2,\dots}|c_j|$$
for arbitrary $c_j\in\mathbb{R}$, $j=1,2,\dots$ Hence,
\begin{eqnarray*}
 \Big\|\sum_{j=1}^\infty c_ju_j\Big\|_W &=&\Big\|\sum_{n=1}^\infty\Big\|\sum_{j=1}^\infty c_ju_j\Big\|_n e_n\Big\|_E\\
 &\le &\Big\|\sum_{n=1}^\infty(|c_n|+2^{-n-1}\sup_{j=1,2,\dots}|c_j|)e_n\Big\|_E\\ &\le&
\Big\|\sum_{n=1}^\infty c_ne_n\Big\|_E+ \sup_{j=1,2,\dots}|c_j|\le 2
 \Big\|\sum_{n=1}^\infty c_ne_n\Big\|_E.
 \end{eqnarray*} 
 
On the other hand, since
$$
 \Big\|\sum_{j=1}^\infty c_ju_j\Big\|_n\ge |c_n|\frac{\|\chi_{A_n}\|_n}{\|\chi_{A_n}\|_W},$$
then from \eqref{EQ6} and the 1-unconditionality of $\{e_n\}_{n=1}^\infty$ it follows
$$
\Big\|\sum_{j=1}^\infty c_ju_j\Big\|_W\ge\Big\|\sum_{n=1}^\infty |c_n|\frac{\|\chi_{A_n}\|_n}{\|\chi_{A_n}\|_W}e_n\Big\|_E\ge\frac45\Big\|\sum_{n=1}^\infty c_ne_n\Big\|_E.$$
This completes the proof.
\end{proof}

Now, we prove in a sense an opposite result. We shall need some more notation. Let $B\subset \mathbb{N}$. For any $f\in E$ we put $P_Bf:=f\chi_B$. Since the basis $\{e_n\}_{n=1}^{\infty}$ is 1-unconditional, $P_B$ is a bounded projection in $E$ with norm 1. Moreover, we set 
$$
Sx:=\sum_{k=1}^\infty \|x\|_k e_k,\;\;x\in W,$$
where $\|\cdot\|_k$, $k=1,2,\dots$, are the norms introduced in the proof of Proposition~\ref{Th0}. Clearly, $Sx\in E$ for each $x\in W$ and $\|x\|_W=\|Sx\|_E$.

\begin{propos}\label{Theor1}
Every sequence $\{x_n\}_{n=1}^\infty$ of pairwise disjoint functions, $\|x_n\|_{W}=1$, contains a subsequence $\{u_i\}_{i=1}^\infty$, which is equivalent in $W$ to some block basis of $\{e_n\}_{n=1}^\infty$.
\end{propos}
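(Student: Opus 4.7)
The plan is to use the (nonlinear) map $Sx = \sum_{k=1}^\infty \|x\|_k e_k$ defined just above, which satisfies $\|Sx\|_E = \|x\|_W$, in order to transfer the question of block equivalence in $W$ to a Bessaga--Pe{\l}czy\'nski type selection in the Banach space $E$ with its $1$-unconditional basis $\{e_n\}$.

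The key technical step is to show that for each fixed $k\in\mathbb{N}$, $\|x_n\|_k \to 0$ as $n \to \infty$. Since the $x_n$ have pairwise disjoint supports in $[0,1]$, one has $t_n := m(\supp x_n) \to 0$. Recalling $\|x_n\|_k = m_k^{-1} k(x_n,m_k^{-1},m_k;X_0,X_1)$, I would decompose $x_n = x_n \chi_{\{|x_n|\le M_n\}} + x_n \chi_{\{|x_n|>M_n\}}$ with a carefully chosen threshold $M_n \to \infty$ (for instance $M_n = \phi_{X_0}(t_n)^{-1/2}$). The bounded part has $X_0$-norm at most $M_n\,\phi_{X_0}(t_n) = \phi_{X_0}(t_n)^{1/2} \to 0$, while the tail lives on a set of measure at most $\phi_{X_1}^{-1}(\|x_n\|_{X_1}/M_n) \le \phi_{X_1}^{-1}(m_1/M_n)$ (by Chebyshev in $X_1$ combined with $\|x_n\|_{X_1} \le m_1 \|x_n\|_W = m_1$), which also tends to zero. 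A careful application of the strict embedding $X_0 \subset X_1$ to the tail then yields $\|x_n\|_k \to 0$. This coordinate decay is the main obstacle of the argument, since the $X_0$-norms of the $x_n$ themselves need not be bounded, so neither summand of the decomposition can be controlled by itself.

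Once coordinate decay is in hand, I would apply the classical Bessaga--Pe{\l}czy\'nski selection principle to the norm-one sequence $\{Sx_n\}$ in $E$: there exist a subsequence $\{x_{n_j}\}$ and pairwise disjoint consecutive intervals $I_1 < I_2 < \cdots$ in $\mathbb{N}$ such that $\|Sx_{n_j} - y_j\|_E < 2^{-j-1}$, where $y_j := P_{I_j} Sx_{n_j}$. These $y_j$ form a block basis of $\{e_n\}$ with $\tfrac12 \le \|y_j\|_E \le 1$.

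It remains to verify that $\{u_j := x_{n_j}\}$ is equivalent in $W$ to $\{y_j\}$ in $E$. For the lower bound, pairwise disjointness of the $u_j$ together with the ideal property of $X_0,X_1$ yields $\|\sum_j \alpha_j u_j\|_k \ge |\alpha_{j_0}|\,\|u_{j_0}\|_k$ for every $j_0,k$: any decomposition $\sum \alpha_j u_j = z_0+z_1$ restricts (via multiplication by $\chi_{\supp u_{j_0}}$) to an admissible decomposition of $\alpha_{j_0} u_{j_0}$ with no larger $X_i$-norms. Coordinate-wise this gives $P_{I_{j_0}} S(\sum\alpha_j u_j) \ge |\alpha_{j_0}|\, y_{j_0}$, and $1$-unconditionality of $\{e_n\}$ yields $\|\sum\alpha_j u_j\|_W \ge \|\sum \alpha_j y_j\|_E$. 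For the upper bound, the triangle inequality on each $\|\cdot\|_k$ shows $S(\sum\alpha_j u_j) \le \sum_j |\alpha_j|\, Sx_{n_j}$ coordinate-wise; splitting $Sx_{n_j} = y_j + (Sx_{n_j}-y_j)$ and using $\max_j |\alpha_j| \le 2\,\|\sum\alpha_j y_j\|_E$ (which follows from $\|y_j\|_E \ge \tfrac12$ and $1$-unconditionality) to absorb the error terms gives $\|\sum\alpha_j u_j\|_W \le 2\,\|\sum\alpha_j y_j\|_E$, completing the two-sided equivalence.
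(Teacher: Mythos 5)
Your overall strategy --- transferring the problem to $E$ via the map $S$, selecting a subsequence whose images $Sx_{n_j}$ are close to a block basis $\{y_j\}$, and then verifying the two-sided equivalence of $\{x_{n_j}\}$ in $W$ with $\{y_j\}$ in $E$ by exploiting disjointness (superadditivity of each $\|\cdot\|_k$ on disjoint functions for the lower bound, subadditivity plus absorption of the errors $\|Sx_{n_j}-y_j\|_E$ for the upper bound) --- is exactly the paper's, and that part of your argument is sound. The problem is the step you yourself identify as the main obstacle: the proof of the coordinate decay $\|x_n\|_k\to 0$ for fixed $k$. Your truncation $x_n=x_n\chi_{\{|x_n|\le M_n\}}+x_n\chi_{\{|x_n|>M_n\}}$ controls the bounded part in $X_0$, but the tail part is only known to live on a set of measure tending to zero and to satisfy $\|x_n\chi_{\{|x_n|>M_n\}}\|_{X_1}\le\|x_n\|_{X_1}\le m_1$. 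The strict-embedding hypothesis applies only to sequences that are \emph{bounded in $X_0$} with shrinking supports; it says nothing about a sequence that is merely bounded in $X_1$ with shrinking supports (any normalized disjoint sequence in $X_1$ shows that such a sequence need not tend to zero in $X_1$). So ``a careful application of the strict embedding to the tail'' cannot be carried out: neither the bound $m_k^{-1}\|x_n\chi_{\{|x_n|>M_n\}}\|_{X_0}$ (possibly infinite) nor $m_k\|x_n\chi_{\{|x_n|>M_n\}}\|_{X_1}$ (only bounded, not small) closes the estimate. This is a genuine gap.

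The fix --- and the paper's route --- is to take the decomposition not by truncating at a level but from the $K$-functional itself. Since $\|x_n\|_j\le\|x_n\|_W=1$, for each $j$ there exist $y_n^j+z_n^j=x_n$, supported on $\supp x_n$, with $\|y_n^j\|_{X_0}\le m_j$ and $\|z_n^j\|_{X_1}\le m_j^{-1}$. Now the $X_0$-part \emph{is} bounded in $X_0$ uniformly in $n$ (for fixed $j$) and has shrinking support, so the strict embedding gives $\|y_n^j\|_{X_1}\to 0$, whence $\limsup_{n}\|x_n\|_{X_1}\le m_j^{-1}$ for every $j$; letting $j\to\infty$ yields $\|x_n\|_{X_1}\to 0$ and hence $\|x_n\|_k\le m_k\|x_n\|_{X_1}\to 0$ for each fixed $k$ (the paper runs this quantitatively along a diagonal subsequence, obtaining $\|x_{n_k}\|_{X_1}\le 2m_k^{-1}$ and $\sum_{j<k}\|x_{n_k}\|_j\le 2^{-k}$, which feeds directly into the gliding-hump selection). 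With this replacement, the rest of your argument goes through essentially verbatim. A minor point: your formula $\|x_n\|_k=m_k^{-1}k(x_n,m_k^{-1},m_k)$ has a spurious factor $m_k^{-1}$; the norm is $k(x_n,m_k^{-1},m_k)$ itself.
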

\begin{proof}
Let $\delta_i>0$, $i=1,2,\dots$, and also $\delta_0:=\sum_{i=1}^\infty \delta_i<1$. Moreover, 
suppose that functions $x_k$, $k=1,2,\dots$, are pairwise disjoint, $\|x_k\|_W=1$ for each $k$. Show that there are a subsequence $\{u_i\}_{i=1}^{\infty}$ of $\{x_k\}_{k=1}^{\infty}$ and sets $B_i=\{n\in\mathbb{N}:\,l_i\le n\le m_{i}\}$, $i=1,2,\dots$, where $1=l_1\le m_1<l_2\le m_2<\dots$, such that for $f_i:=P_{B_i}Su_{i}$, $i=1,2,\dots$, we have
\begin{equation}\label{EQ13}
\|Su_i-f_i\|_E<\delta_i,\;\;i=1,2,\dots
\end{equation}

First, since $\|x_n\|_W=1$ and the basis $\{e_n\}_{n=1}^\infty$
is 1-unconditional, then $\|x_n\|_k\le 1$, or equivalently,
$$
\inf\{m_k^{-1}\|y_n\|_{X_0}+m_k\|z_n\|_{X_1}:\,y_n+z_n=x_n\}\le 1.$$
Therefore, for each $k=1,2,\dots$ there are functions $y_{n}^k\in X_0$ and $z_{n}^k\in X_1$, $n=1,2,\dots$, such that
$y_n^k+z_n^k=x_n$, ${\rm supp}\, y_{n}^k={\rm supp}\, z_{n}^k= {\rm supp}\, x_n$, $\|y_{n}^k\|_{X_0}\le m_k$ and $\|z_{n}^k\|_{X_1}\le m_k^{-1}$. This observation and inequality \eqref{EQ0} imply
\begin{equation}\label{EQ7}
\|x_n\|_{X_1}\le \|y_{n}^k\|_{X_1}+\|z_{n}^k\|_{X_1}\le m_k^{-1}+\|y_{n}^k\|_{X_1}.
\end{equation}

Further, $m(\supp\, y_{n}^k)=m(\supp\, x_n)\to 0$ as $n\to\infty$ and $\|y_{n}^k\|_{X_0}\le m_k$, $n=1,2,\dots$ Hence, because of $X_0$ is strictly embedded into $X_1$, we infer that $\|y_{n}^k\|_{X_1}\to 0$ as $n\to\infty$ for each $k=1,2,\dots$ Thus, for every $k=1,2,\dots$ there exists an increasing sequence of positive integers $\{n_k\}_{k=1}^\infty$ such that $\|y_{n_k}^k\|_{X_1}\le m_k^{-1}$. Therefore, in view of \eqref{EQ7}, we have
\begin{equation*}\label{EQ8}
\|x_{n_k}\|_{X_1}\le 2m_k^{-1},\;\;k=1,2,\dots
\end{equation*}
Moreover, by the definition of the norm $\|\cdot\|_j$, 
$$
\|x_{n_k}\|_{j}\le m_j\|x_{n_k}\|_{X_1},\;\;j=1,2,\dots$$
Combining the latter inequalities, we deduce
$$
\|x_{n_k}\|_{j}\le \frac{2m_j}{m_k},\;\;k,j=1,2,\dots$$
and therefore from \eqref{EQ2} it follows 
\begin{equation}\label{EQ8.1}
\Big\|\sum_{j=1}^{k-1}\|x_{n_k}\|_{j}e_j\Big\|_E\le \sum_{j=1}^{k-1}\|x_{n_k}\|_{j}\le\frac{2}{m_k}\sum_{j=1}^{k-1}m_j\le 2^{-k},\;\;k=1,2,\dots
\end{equation}

We set $l_1:=1$ and $u_1:=x_1$. Since $u_1\in W$, we can find a positive integer $m_1$ such that
$$
\Big\|\sum_{j=m_1+1}^\infty\|u_{1}\|_{j}e_j\Big\|_E< \delta_1.$$
Hence, if $B_1:=\{n\in\mathbb{N}:\,l_1\le n\le m_{1}\}$ and $f_1:=P_{B_1}Su_{1}$ we get
$$
\|Su_1-f_1\|_E<\delta_1.
$$
Next, we choose $k_2>m_1+1$ such that $2^{-k_2}<\delta_2/2$. Then, setting $l_2:=k_2-1$ and $u_2:=x_{n_{k_2}}$ we have $l_2>m_1$ and, by \eqref{EQ8.1},
$$
\Big\|\sum_{j=1}^{l_2}\|u_{2}\|_{j}e_j\Big\|_E<\frac{\delta_2}{2}.
$$
Moreover, for some $m_2\ge l_2$
$$
\Big\|\sum_{j=m_2+1}^\infty\|u_{2}\|_{j}e_j\Big\|_E< \frac{\delta_2}{2}.$$
Combining the latter inequalities, we conclude 
$$
\|Su_2-f_2\|_E<\delta_2,
$$
where $B_2:=\{n\in\mathbb{N}:\,l_2\le n\le m_{2}\}$ and $f_2:=P_{B_2}Su_{2}$. 

Proceeding in the same way, we get a subsequence $\{u_i\}_{i=1}^{\infty}$ of $\{x_k\}_{k=1}^{\infty}$ and sets $B_i=\{n\in\mathbb{N}:\,l_i\le n\le m_{i}\}$, $i=1,2,\dots$, where $1=l_1\le m_1<l_2\le m_2<\dots$ such that the elements $f_i:=P_{B_i}Su_{i}$, $i=1,2,\dots$, satisfy \eqref{EQ13}. Clearly, from \eqref{EQ13} and the fact that $\|u_i\|_W=1$, $i=1,2,\dots$, it follows
\begin{equation}\label{EQ11}
\|f_i\|_E\ge 1-\delta_i,\;\;i=1,2,\dots
\end{equation}
Prove that the constructed block basis $\{f_i\}_{i=1}^\infty$ of the basis $\{e_k\}_{k=1}^\infty$ is equivalent to the sequence $\{u_i\}_{i=1}^\infty\subset W$.

Assume that the series $\sum_{i=1}^{\infty} c_iu_{i}$
converges in $W$. Since the sequence $\{u_{i}\}_{i=1}^\infty$ consists of pairwise disjoint functions and the basis $\{e_{j}\}_{j=1}^\infty$ is 1-unconditional, from the definition of the norm in $W$ it follows 
\begin{eqnarray*}
 \Big\|\sum_{i=1}^{\infty} c_iu_{i}\Big\|_W &=& \Big\|\sum_{i=1}^{\infty} |c_i|u_{i}\Big\|_W=\Big\|\sum_{j=1}^{\infty} \Big\|\sum_{i=1}^{\infty}|c_i|u_{i}\Big\|_je_j\Big\|_E\\
 &\ge& \Big\|\sum_{k=1}^{\infty}\sum_{j=l_k}^{m_k} \Big\|\sum_{i=1}^{\infty}|c_i|u_{i}\Big\|_je_j\Big\|_E
 \ge \Big\|\sum_{k=1}^{\infty} \sum_{j=l_k}^{m_k}|c_k|\|u_{k}\|_je_j\Big\|_E\\ &=& \Big\|\sum_{k=1}^{\infty} |c_k|f_{k}\Big\|_E= \Big\|\sum_{k=1}^{\infty} c_kf_{k}\Big\|_E.
 \end{eqnarray*} 
 
 Conversely, assume that the series $\sum_{k=1}^{\infty} c_kf_{k}$ converges in $E$.  Then, by \eqref{EQ13} and \eqref{EQ11}, we have 
\begin{eqnarray*}
 \Big\|\sum_{k=1}^{\infty} c_ku_{k}\Big\|_W &=& \Big\|\sum_{j=1}^{\infty} \Big\|\sum_{k=1}^{\infty}c_ku_{k}\Big\|_je_j\Big\|_E\le \Big\|\sum_{k=1}^{\infty}|c_k|\sum_{j=1}^{\infty} \|u_{k}\|_je_j\Big\|_E \\
 &\le& 
 \Big\|\sum_{k=1}^{\infty}|c_k|\sum_{j\in B_k} \|u_{k}\|_je_j\Big\|_E+\Big\|\sum_{k=1}^{\infty}|c_k|\sum_{j\not\in B_k}\|u_k\|_je_j\Big\|_E\\
 &\le& 
 \Big\|\sum_{k=1}^{\infty}c_kf_k\Big\|_E+\max_{k=1,2,\dots}|c_k|
\sum_{k=1}^{\infty}\delta_k \le
\frac{\delta_0}{1-\delta_0} \Big\|\sum_{k=1}^{\infty}c_kf_k\Big\|_E,
 \end{eqnarray*} 
 and the proof is completed.
\end{proof}

\section{DH and DC properties of interpolation spaces and duality problem}

We start with the following definitions. 

\begin{defin}\cite{FTT-09}
A Banach lattice $E$ is {\it disjointly homogeneous} (shortly DH) if two arbitrary normalized disjoint sequences from $E$ contain equivalent subsequences.

Given $1\le p\le\infty$, a Banach lattice $E$ is called  {\it $p$-disjointly homogeneous} (shortly p-DH) if each  normalized disjoint sequence has a subsequence equivalent in $E$ to the unit vector basis of $l_p$ ($c_0$ when $p=\infty$).
\end{defin}

\begin{defin}\cite{FHSTT}
A Banach lattice $E$ is called {\it disjointly complemented} ($E\in DC$) if every disjoint sequence from $E$ has a subsequence whose span is complemented in $E$.
\end{defin}

For examples and other information related to DH, $p$-DH and DC Banach lattices and r.i. spaces see \cite{FTT-09,bib:04,FHSTT,FHT-survey,HST,A15}.

Results obtained in Section~\ref{LP} allow to get a direct link between DH and DC properties of the Lions-Peetre interpolation spaces and their parameters. Next, we consider the space $E$ as a Banach lattice ordered by the unconditional basis $\{e_n\}_{n=1}^{\infty}$, i.e., if $x=\sum_{n=1}^\infty a_ne_n$ then $x\ge 0$ iff $a_n\ge 0$ for each $n=1,2,\dots$

\begin{theorem}\label{Th3}
Let $1\le p\le\infty$. The following conditions are equivalent:

(a) $E$ is a DH (resp. $p$-DH) lattice;

(b) every two normalized block bases of the basis $\{e_n\}_{n=1}^{\infty}$ contain equivalent in $E$ subsequences (resp. every  normalized block basis of $\{e_n\}_{n=1}^{\infty}$ contains a subsequence equivalent in $E$ to the unit vector basis of $l_p$);  

(c) $W:=W_E^K(X_0,X_1)$ is a DH (resp. $p$-DH) r.i. space.
\end{theorem}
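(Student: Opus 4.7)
The plan is to run a cycle (a)$\Rightarrow$(b)$\Rightarrow$(c)$\Rightarrow$(a), using Propositions~\ref{Th0} and~\ref{Theor1} for the two implications that cross between $E$ and $W$, and treating the DH and $p$-DH variants in parallel since the transfer of the equivalence type through both propositions is automatic.

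The implication (a)$\Rightarrow$(b) is immediate: a normalized block basis of $\{e_n\}$ is a normalized disjoint sequence in $E$ (with the lattice order induced by the basis), so hypothesis (a) applies directly. For (b)$\Rightarrow$(c), I start from two normalized pairwise disjoint sequences in $W$. Proposition~\ref{Theor1} furnishes subsequences which are $W$-equivalent to block bases $\{f_i\},\{g_i\}$ of $\{e_n\}$ in $E$ whose $E$-norms are uniformly bounded above and bounded below away from zero. Hypothesis (b) then yields further subsequences of $\{f_i\},\{g_i\}$ equivalent in $E$ (resp.\ equivalent to the unit vector basis of $l_p$), and composing with the equivalence of Proposition~\ref{Theor1} transports this to the required subsequences in $W$.

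For (c)$\Rightarrow$(a), I first reduce to block bases: a routine gliding-hump argument on the basis $\{e_n\}$ shows that every normalized disjoint sequence in $E$ has a subsequence $E$-equivalent to a genuine block basis. Given two such normalized block bases $\{v_n\},\{w_n\}$ with $v_n=\sum_k a_k^{(n)}e_k$, I use the pairwise disjoint sets $(A_k)$ supplied by Proposition~\ref{Th0} to define
$$
\tilde v_n:=\sum_k a_k^{(n)}\frac{\chi_{A_k}}{\|\chi_{A_k}\|_W},
$$
and $\tilde w_n$ analogously. Since the $A_k$ are pairwise disjoint in $[0,1]$ and the $v_n$ (resp.\ $w_n$) have pairwise disjoint index-supports, $\{\tilde v_n\},\{\tilde w_n\}$ are pairwise disjoint functions in $W$; extending the basis equivalence of Proposition~\ref{Th0} to arbitrary linear combinations yields $\|\sum c_n v_n\|_E\asymp\|\sum c_n\tilde v_n\|_W$, so $\|\tilde v_n\|_W$ and $\|\tilde w_n\|_W$ are comparable to $1$. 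After renormalizing, hypothesis (c) provides $W$-equivalent subsequences of $\{\tilde v_n\},\{\tilde w_n\}$, which transport back to $E$-equivalent subsequences of $\{v_n\},\{w_n\}$. The $p$-DH case is identical, working with a single sequence.

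The only real delicacy I anticipate is the gliding-hump reduction opening the (c)$\Rightarrow$(a) step: starting from a normalized disjoint sequence in $E$ one must pass to a subsequence whose basis tails are summably small, and then invoke a small-perturbation lemma to upgrade the approximating block basis to $E$-equivalence with the original subsequence. Everything else is bookkeeping on the disjoint-sequence/block-basis dictionary provided by Propositions~\ref{Th0} and~\ref{Theor1}.
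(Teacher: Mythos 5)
Your proposal is correct and follows essentially the same cycle $(a)\Rightarrow(b)\Rightarrow(c)\Rightarrow(a)$ as the paper, invoking Proposition~\ref{Theor1} for $(b)\Rightarrow(c)$ and Proposition~\ref{Th0} for $(c)\Rightarrow(a)$. The only (harmless) deviation is the gliding-hump reduction opening your $(c)\Rightarrow(a)$ step: the paper dispenses with it, since a disjoint sequence in $E$ ordered by its basis is by definition a disjointly supported sequence $u_i=\sum_{k\in M_i}a_k^ie_k$ with the $M_i$ pairwise disjoint (but not necessarily consecutive or finite), and the isomorphism $e_k\mapsto \chi_{A_k}/\|\chi_{A_k}\|_W$ from Proposition~\ref{Th0} already carries any such sequence to a sequence of pairwise disjoint functions in $W$, so no passage through genuine block bases is needed there.
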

\begin{proof}
Obviously, $(a)$ implies $(b)$, while the implication $(b)\Rightarrow (c)$ is an immediate consequence of Proposition~\ref{Theor1}.

Now, suppose $W$ is a DH space. Let $\{M_i\}_{i=1}^\infty$ and $\{L_i\}_{i=1}^\infty$ be two sequences of pairwise disjoint non-empty subsets of $\mathbb{N}$, and let $u_i=\sum_{k\in M_i}a_k^ie_k$ and $v_i=\sum_{k\in L_i}b_k^ie_k$, where $a_k^i, b_k^i\in\mathbb{R}$, $\|u_i\|_E=\|v_i\|_E=1$, $i=1,2,\dots$ By Proposition~\ref{Th0}, there is a sequence $(A_i)_{i=1}^{\infty}$ of pairwise disjoint measurable subsets of $[0,1]$ such that the functions $x_i:=\frac{\chi_{A_i}}{\|\chi_{A_i}\|_W}$, $i=1,2,\dots$, form a sequence equivalent in $W$ to the basis $\{e_i\}_{i=1}^{\infty}$. Hence, the functions
$y_i:=\sum_{k\in M_i}a_k^ix_k$, $i=1,2,\dots$ (resp. $z_i:=\sum_{k\in L_i}b_k^ix_k$, $i=1,2,\dots$), are pairwise disjoint, $\|y_i\|_W\asymp \|z_i\|_W\asymp 1$, $i=1,2,\dots$ and the sequence $\{y_i\}$ (resp. $\{z_i\}$) is equivalent in $W$ to the sequence $\{u_i\}$ (resp. $\{v_i\}$). By the condition, the sequences $\{y_i\}$ and $\{z_i\}$ contain equivalent in $W$ subsequences. Clearly, the sequences $\{u_i\}$ and $\{v_i\}$ share the same property (in $E$) and, as a result, we get $(a)$. 

Since for the $p$-DH property the implication $(c)\Rightarrow (a)$ can be deduced in the same way, the proof is completed.
\end{proof}

The following result answers the question raised in the paper \cite[p.~5877]{FHSTT} (see also Question~3 in the survey \cite{FHT-survey}).

\begin{theorem}\label{main}
For every $1<p<\infty$ there exists a $p$-DH reflexive r.i. space $X_p$ on $[0,1]$ such that its dual space $X_p^*$ is not DH.
\end{theorem}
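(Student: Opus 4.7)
The plan is to realise $X_p$ as a Lions--Peetre space $W_E^K(X_0,X_1)$ for a suitably chosen reflexive sequence space $E$ with normalized $1$-unconditional basis $\{e_n\}$ and a reflexive strict couple of r.i.\ spaces $X_0 \subsetneq X_1$ on $[0,1]$, then to deduce everything from Theorem~\ref{Th3} together with a dual counterpart of Proposition~\ref{Th0}. The idea is that Theorem~\ref{Th3} will transfer the $p$-DH property of $E$ to $X_p$, while a dual identification will transfer the failure of DH of $E^*$ to $X_p^*$.

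The core ingredient is a reflexive Banach space $E$ with normalized $1$-unconditional basis $\{e_n\}$ satisfying (i) every normalized block basis of $\{e_n\}$ has a subsequence equivalent in $E$ to the unit vector basis of $\ell_p$, and (ii) the biorthogonal basis $\{e_n^*\}$ in $E^*$ admits two normalized block bases no subsequences of which are equivalent in $E^*$. Granting such an $E$, I would take any reflexive strict pair of r.i.\ spaces $X_0 \subsetneq X_1$ on $[0,1]$ (e.g.\ two $L_q$-spaces, renormalised so that \eqref{EQ0}--\eqref{EQ1} hold) and set $X_p := W_E^K(X_0,X_1)$ with the weights $\alpha_n = m_n^{-1}$, $\beta_n = m_n$ from \eqref{EQ2}. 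Reflexivity of $X_p$ follows from reflexivity of $E$ and of the couple, and the $p$-DH property of $X_p$ is the immediate implication $(b)\Rightarrow(c)$ in Theorem~\ref{Th3} applied to condition~(i).

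To show that $X_p^*$ is not DH, I would first prove a dual version of Proposition~\ref{Th0}: the normalized sequence $\chi_{A_n}/\|\chi_{A_n}\|_{X_p^*}$, where $\{A_n\}$ are the disjoint sets produced in the proof of Proposition~\ref{Th0}, is equivalent in $X_p^*$ to $\{e_n^*\}$ in $E^*$. This should follow by a direct duality computation, using that the disjointness of supports makes the pairing \eqref{dual} split across the $A_n$, so the $X_p^*$-norm of a disjoint combination $\sum c_n \chi_{A_n}/\|\chi_{A_n}\|_{X_p^*}$ reduces via Proposition~\ref{Th0} to the $E^*$-norm of $\sum c_n e_n^*$. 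Once this is in place, the two non-equivalent block bases of $\{e_n^*\}$ furnished by~(ii) lift, by exactly the averaging argument used in the $(c)\Rightarrow(a)$ direction of Theorem~\ref{Th3}, to two normalized pairwise disjoint sequences in $X_p^*$ with no equivalent subsequences, so $X_p^*$ is not DH.

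The main obstacle is constructing $E$ itself: the interpolation machinery of Section~\ref{LP} only transports lattice properties between $E$ and $X_p$ and cannot generate an asymmetry between $E$ and $E^*$. Condition (i) pushes normalized blocks of $\{e_n\}$ towards $\ell_p$, which for symmetric examples pulls $E^*$ towards $\ell_{p'}$ and thus towards being DH; in particular $E=\ell_p$ is excluded. One therefore needs a genuinely non-symmetric construction, for example a suitably weighted Lorentz-type sequence space or a $p$-convexified Rosenthal/Tsirelson-style space with unconditional basis, in which disjoint blocks in $E$ average out to $\ell_p$ while disjoint blocks in $E^*$ retain a heterogeneous Lorentz-type structure that witnesses the failure of DH.
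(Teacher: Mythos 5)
Your overall architecture coincides with the paper's: realize $X_p$ as $W_{E}^K(X_0,X_1)$ for a sequence lattice $E$ that is $p$-DH with non-DH dual, get the $p$-DH property of $X_p$ from Theorem~\ref{Th3}, and transfer the failure of DH in $E^*$ to $X_p^*$ by showing that the biorthogonal functionals of the disjoint sequence $\{\chi_{A_n}/\|\chi_{A_n}\|_W\}$ from Proposition~\ref{Th0} span a copy of $[e_n^*]$ inside $X_p^*$. That duality step is exactly the paper's inequality \eqref{EQequiv}; note that its upper estimate is not a ``direct splitting of the pairing'' but requires the norm-one conditional expectation $Px=\sum_k\langle x,y_k\rangle x_k$ onto $[x_k]$ (the averaging operator of \cite[Theorem~2.a.4]{LTbook2}), which is what lets you restrict the supremum in \eqref{dual} to the span of the $x_k$. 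This is a fixable imprecision, as is your claim that reflexivity of $X_p$ follows from reflexivity of $E$ and the couple: the paper instead observes that a $p$-DH lattice with $1<p<\infty$ contains no disjoint sequence equivalent to the $c_0$ or $\ell_1$ basis and invokes Lozanovsky's theorem, which avoids having to prove any general reflexivity statement for $W_E^K$.

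The genuine gap is the one you yourself flag and then leave open: you never produce the space $E$. Everything in your argument is conditional on the existence of a reflexive lattice with a normalized $1$-unconditional basis satisfying your conditions (i) and (ii), and your closing paragraph only speculates about where such a space might come from. Without that input the proof does not exist, since the entire interpolation machinery of Section~\ref{LP} is a transfer device and proves nothing about any particular $E$. The paper closes this gap by citation: \cite[Theorem~6.7]{FHSTT} (going back to \cite[Example~1]{JO}) already provides, for each $1<p<\infty$, a $p$-DH Banach lattice $E_p$ ordered by a normalized unconditional basis whose dual is not DH, and the open problem being solved here was precisely to pass from that sequence-space example to an r.i.\ \emph{function} space on $[0,1]$. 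So you have correctly reconstructed the reduction, which is the paper's actual contribution, but the theorem is not proved until the existence of $E$ is either established or correctly attributed.
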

\begin{proof}
Let $X_0$ and $X_1$ be arbitrary r.i. spaces on $[0,1]$ such that $X_0$ is strictly embedded into $X_1$ (for example, we can put $X_0=L_q[0,1]$, $X_1=L_r[0,1]$ with $1<r<q<\infty$). In \cite[Theorem~6.7]{FHSTT} (see also \cite[Example~1]{JO}) it is shown that for every $1<p<\infty$ there is a $p$-DH Banach lattice $E_p$ ordered by an unconditional basis $\{e_n\}_{n=1}^\infty$, $\|e_n\|_{E_p}=1$, $n=1,2,\dots$, such that the dual space $E_p^*$ is not DH. Clearly, without loss of generality, we may assume that the basis $\{e_n\}$ is 1-unconditional (cf. \cite[p.~19]{LTbook1}). Therefore, by Theorem~\ref{Th3}, $X_p:=W_{E_p}^K(X_0,X_1)$ is a $p$-DH r.i. space. Since neither $c_0$ nor $l_1$ is lattice embeddable in $X_p$, by Lozanovsky theorem (see \cite{Loz} or \cite[Theorem~4.71]{AB85}), $X_p$ is reflexive.

As above, there exists a sequence $(A_k)_{k=1}^{\infty}$ of pairwise disjoint measurable subsets of $[0,1]$ such that the sequence $\{x_k\}_{k=1}^{\infty}$, where $x_k:=\frac{\chi_{A_k}}{\|\chi_{A_k}\|_W}$, $k=1,2,\dots$, is equivalent to the basis $\{e_k\}_{k=1}^{\infty}$. Clearly, the functions $y_k:=\frac{\|\chi_{A_k}\|_W}{m(A_k)}\chi_{A_k}$, $k=1,2,\dots$, form the biorthogonal system to $\{x_k\}_{k=1}^{\infty}$. Denoting by $e_k^*$, $k=1,2,\dots$, elements of biorthogonal system to the basis $\{e_k\}_{k=1}^\infty$, we show that for all $b_k\in\mathbb{R}$ we have
 \begin{equation}\label{EQequiv}
C^{-1}\Big\|\sum_{k=1}^\infty b_ke_k^*\Big\|_{E_p^*}\le \Big\|\sum_{k=1}^\infty b_ky_k\Big\|_{X_p^*}\le C\Big\|\sum_{k=1}^\infty b_ke_k^*\Big\|_{E_p^*},
\end{equation}
where $C$ is the equivalence constant of $\{x_k\}_{k=1}^{\infty}$ and $\{e_k\}_{k=1}^{\infty}$.

Indeed, the spaces $E_p$ and $X_p$ are separable and so their duals coincide with the K\"{o}the duals. Hence, \begin{eqnarray*}
 \Big\|\sum_{k=1}^\infty b_ky_k\Big\|_{X_p^*} &\ge& \sup\Big\{\Big\langle \sum_{k=1}^\infty a_kx_k,\sum_{k=1}^\infty b_ky_k\Big\rangle:\, \Big\|\sum_{k=1}^\infty a_kx_k\Big\|_{X_p}\le 1\Big\}\\
 &\ge& C^{-1}
 \sup\Big\{\sum_{k=1}^\infty a_kb_k:\, \Big\|\sum_{k=1}^\infty a_ke_k\Big\|_{E_p}\le 1\Big\}\\
 &=& C^{-1}\Big\|\sum_{k=1}^\infty b_ke_k^*\Big\|_{E_p^*}
 \end{eqnarray*} 
Conversely (see Section~\ref{Prel} or \cite[Theorem~2.a.4]{LTbook2}), the projection
$$
Px(t):=\sum_{k=1}^\infty \langle x,y_k\rangle x_k(t),\;\;0\le t\le 1,$$
is bounded in $X_p$ with norm 1. Therefore,
\begin{eqnarray*}
 \Big\|\sum_{k=1}^\infty b_ky_k\Big\|_{X_p^*} &=& \sup\Big\{\Big\langle x,\sum_{k=1}^\infty b_ky_k\Big\rangle:\, \|x\|_{X_p}\le 1\Big\}\\
 &\le& \sup\Big\{\Big\langle Px,\sum_{k=1}^\infty b_ky_k\Big\rangle:\, \|Px\|_{X_p}\le 1\Big\}\\
 &\le&  K
 \sup\Big\{\sum_{k=1}^\infty a_kb_k:\, \Big\|\sum_{k=1}^\infty a_ke_k\Big\|_{E_p}\le 1\Big\}\\
 &=& C\Big\|\sum_{k=1}^\infty b_ke_k^*\Big\|_{E_p^*},
 \end{eqnarray*} 
and \eqref{EQequiv} is proved. Since $E_p^*$ is not a DH lattice and the functions $y_k$, $k=1,2,\dots$, are pairwise disjoint, from \eqref{EQequiv} it follows that $X_p^*$ is also not DH. 
\end{proof}

\begin{theorem}\label{Th3.5}
The following conditions are equivalent:

(a) $E$ is a DC lattice;

(b) each block basis of the basis $\{e_n\}_{n=1}^{\infty}$ contains a complemented in $E$ subsequence;  

(c) $W:=W_E^K(X_0,X_1)$ is a DC r.i. space.
\end{theorem}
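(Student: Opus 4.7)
My plan is to prove the equivalence via the cycle $(a)\Rightarrow(b)\Rightarrow(c)\Rightarrow(a)$, following the scheme of the proof of Theorem~\ref{Th3}. The implication $(a)\Rightarrow(b)$ is immediate: any block basis of the 1-unconditional basis $\{e_n\}$ is a normalized pairwise disjoint sequence in the Banach lattice $E$, so $(a)$ supplies a complemented subsequence.

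For $(c)\Rightarrow(a)$ I will mirror the device used in the proof of Theorem~\ref{main}. Given a normalized disjoint sequence $\{u_n\}$ in $E$ with pairwise disjoint supports $M_n\subset\mathbb{N}$, Proposition~\ref{Th0} provides pairwise disjoint $A_k\subset[0,1]$ with $x_k:=\chi_{A_k}/\|\chi_{A_k}\|_W$ equivalent in $W$ to $\{e_k\}$; the rule $\Phi(e_k):=x_k$ therefore extends to a bounded isomorphism $\Phi:E\to[x_k]_W$. The functions $y_n:=\Phi(u_n)=\sum_{k\in M_n}(u_n)_k x_k$ are pairwise disjoint in $W$ (the $A_k$ and $M_n$ being both disjoint). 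By $(c)$ some subsequence $\{y_{n_j}\}$ is complemented in $W$ by a projection $R$, and then $\Phi^{-1}\circ R\circ\Phi:E\to[u_{n_j}]_E$ is a bounded projection, fixing each $u_{n_j}$ because $R$ fixes $\Phi(u_{n_j})=y_{n_j}$.

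The principal work lies in $(b)\Rightarrow(c)$. Starting from a normalized disjoint $\{x_n\}\subset W$, Proposition~\ref{Theor1} (applied with $\sum\delta_i$ sufficiently small) yields a subsequence $\{u_i\}$ and a block basis $\{f_i\}$ of $\{e_n\}$ with $\|Su_i-f_i\|_E<\delta_i$ and $\{u_i\}_W$ equivalent to $\{f_i\}_E$. By $(b)$ there is a complemented subsequence $\{f_{i_k}\}$ in $E$ with projection $P=\sum_k g_k^*\otimes f_{i_k}$, and using the 1-unconditionality of $\{e_n\}$ I may take each $g_k^*\in E^*$ supported on the block $B_{i_k}$. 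I propose the candidate projection
$$Q(x):=\sum_k\psi_k(x)\,u_{i_k},$$
where $\psi_k\in W^*$ is supported on $\supp u_{i_k}$ with $\psi_k(u_{i_j})=\delta_{jk}$ and $\|\psi_k\|_{W^*}\le 1$; such $\psi_k$ are supplied by Hahn--Banach on the band $\{y\in W:\supp y\subset\supp u_{i_k}\}$ together with the norm-one band projection of $W$ onto that band (available because $W$ is an r.i.\ space on $[0,1]$, so multiplication by $\chi_{\supp u_{i_k}}$ has norm one). The equivalence in Proposition~\ref{Theor1} immediately gives
$$\|Q(x)\|_W\;\asymp\;\Big\|\sum_k\psi_k(x)\,f_{i_k}\Big\|_E.$$

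The main obstacle is the estimate $\|\sum_k\psi_k(x)\,f_{i_k}\|_E\le C\|x\|_W$: since the map $S:W\to E$ is only sublinear, one cannot legitimately set $\psi_k(x):=g_k^*(Sx)$ (which would make the bound immediate via $\|P\|\cdot\|Sx\|_E=\|P\|\cdot\|x\|_W$). My plan for overcoming this is to exploit the specific structure produced by Proposition~\ref{Theor1} --- namely $f_i=P_{B_i}Su_i$ together with $\|Su_i\|_E=\|u_i\|_W$ --- and the ideal property of $W$, in order to tailor $\psi_k$ so that $|\psi_k(x)|$ is pointwise (in $x$) majorized by a scalar $\beta_k(x)$ for which the block-basis expansion $\sum_k\beta_k(x)\,f_{i_k}$ has $E$-norm at most a constant multiple of $\|x\|_W$. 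Then 1-unconditionality of $\{f_{i_k}\}$ (inherited from $\{e_n\}$) together with $\|P\|<\infty$ and a perturbation argument absorbing the $\delta_i$'s will deliver the required bound and complete the proof.
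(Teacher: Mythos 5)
Your implications $(a)\Rightarrow(b)$ and $(c)\Rightarrow(a)$ are fine and essentially coincide with the paper's argument. The genuine gap is in $(b)\Rightarrow(c)$, and it sits exactly at the point you yourself flag as ``the main obstacle'': since $S$ is only sublinear you cannot take $\psi_k(x):=g_k^*(Sx)$, but your substitute is an announcement of intent rather than an argument. If $\psi_k$ is merely a norm-one functional supported on $D_k:=\supp u_{i_k}$ with $\psi_k(u_{i_j})=\delta_{jk}$, the only available bound is $|\psi_k(x)|\le\|x\chi_{D_k}\|_W$, and the needed estimate becomes $\bigl\|\sum_k\|x\chi_{D_k}\|_W\,f_{i_k}\bigr\|_E\le C\|x\|_W$. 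Although $S(x\chi_{D_k})\le Sx$ pointwise for every $k$, for a general $x$ these elements of $E$ are \emph{not} concentrated on the disjoint blocks $B_{i_k}$ (only the special vectors $Su_{i_k}$ are, by the construction in Proposition~\ref{Theor1}), so there is no disjointness in $E$ to exploit; for $E=\ell_1$, say, the estimate would require $\sum_k\|x\chi_{D_k}\|_W\le C\|x\|_W$, which fails in a general r.i.\ space $W$. The same difficulty reappears for any choice of dominating seminorm $\beta_k$: one ends up comparing the norms of two disjointly supported sequences with termwise comparable norms, which is precisely the kind of comparison that cannot be made without further hypotheses.

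The paper closes this gap with a linearization device absent from your sketch: by the Hahn--Banach--Kantorovich theorem (\cite[Theorem~1.25]{AB85}, applicable since $E$, being order continuous, is Dedekind complete) there exist \emph{linear} operators $L_k:W\to E$ with $L_kx_k=Sx_k$ and $|L_kx|\le Sx$ for all $x\in W$. Gluing them as $Lx:=\sum_iP_{B_i}L_ix$ yields a linear $L$ with $|Lx|\le Sx$, hence $\|Lx\|_E\le\|Sx\|_E=\|x\|_W$, and $Lx_k=f_k+P_{\mathbb{N}\setminus B_k}Lx_k$ with $\|Lx_k-f_k\|_E<\delta_k$. One then perturbs the complemented subsequence $\{f_{k_i}\}$ to $\{Lx_{k_i}\}$, takes the associated projection $Q=\sum_i\langle\cdot,v_i\rangle Lx_{k_i}$ in $E$, and sets $Tx:=\sum_i\langle Lx,v_i\rangle x_{k_i}$; boundedness of $T$ is then immediate from the domination $|Lx|\le Sx$ in $E$, with no comparison of disjoint sequences required. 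Without some such replacement of $S$ by a dominated linear operator, your functionals $\psi_k$ cannot be shown to assemble into a bounded projection, so the proof as proposed is incomplete.
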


\begin{proof}
First, the implication $(a)\Rightarrow (b)$ is obvious.

$(b)\Rightarrow (c)$. 
Suppose $\{x_k\}_{k=1}^{\infty}$ is a sequence of pairwise disjoint functions, $\|x_k\|_W=1$, $k\in\mathbb{N}$. We need to show that it contains a subsequence spanning in $W$ a complemented subspace. 

Let $\delta_k>0$, $k=1,2,\dots$, be such that $\sum_{k=1}^\infty\delta_k<1$ (they will be specified later). Thanks to Proposition~\ref{Theor1} and its proof, we can assume that there are sets $B_k=\{n\in\mathbb{N}:\,l_k\le n\le m_k\}$, $k=1,2,\dots$, where $1=l_1\le m_1<l_2\le m_2<\dots$, such that $\{x_k\}_{k=1}^{\infty}$ is equivalent to the sequence $\{f_k\}_{k=1}^{\infty}$, $f_k=P_{B_k}Sx_{k}$, i.e.,
for some constant $C>0$ and all $c_k\in\mathbb{R}$
\begin{equation}\label{EQ12}
C^{-1}\Big\|\sum_{k=1}^{\infty} c_kf_{k}\Big\|_E\le\Big\| \sum_{k=1}^{\infty} c_kx_{k}\Big\|_W\le C\Big\|\sum_{k=1}^{\infty} c_kf_{k}\Big\|_E.
\end{equation}
Moreover, as above, we have
\begin{equation}\label{EQ13e}
\|Sx_k-f_k\|_E<\delta_k\;\;\mbox{and}\;\;\|f_k\|_E\ge 1-\delta_k,\;\;k=1,2,\dots
\end{equation}

Now, by \cite[Theorem~1.25]{AB85}, for every $k\in\fg$ we can find a linear operator $L_k:\,W\to E$ such that
$L_kx_{k}=Sx_k$ and $|L_kx|\le Sx$ for all $x\in W.$ Define
on $W$ the linear operator $L$ by
$$
Lx:=\sum_{i=1}^\infty P_{B_i}L_ix.$$ Since the sets $B_i$, $i=1,2,\dots$, are pairwise disjoint, then
\begin{equation}\label{eq19}
|Lx|\le\sup_{i\in\fg}|L_ix|\le Sx,\;\;x\in W,
\end{equation}
whence $L$ is bounded from $W$ into $E$. Moreover, setting $u_k:=Lx_k$, $k=1,2,\dots$, we have
$$
u_k=P_{B_k}Sx_k+P_{\mathbb{N}\setminus B_k}Lx_k=f_k+P_{\mathbb{N}\setminus B_k}Lx_k.$$
Therefore, from pointwise estimate \eqref{eq19} and the first inequality in \eqref{EQ13e} it follows
$$
\|u_k-f_k\|_E=\|P_{\mathbb{N}\setminus B_k}Lx_k\|_E\le \|P_{\mathbb{N}\setminus B_k}Sx_k\|_E  = \|Sx_k-f_k\|_E<\delta_k,\;\;k=1,2,\dots$$ 
Combining this together with the second inequality in \eqref{EQ13e}, we get
\begin{equation}\label{eq19.5}
\sum_{k=1}^\infty\frac{\|u_k-f_k\|_E}{\|f_k\|_E}\le\sum_{k=1}^\infty
\frac{\delta_k}{1-\delta_k}.
\end{equation}
Since the sequence $\{f_k\}_{k=1}^\infty$ is a block basis of $\{e_k\}_{k=1}^\infty$, by hypothesis, it contains a subsequence $\{f_{k_i}\}$ complemented in $E$. 
Therefore, if $\delta_k>0$ are sufficiently small, then, by the  principle of small perturbations \cite[Proposition~1.a.9]{LTbook1}, from \eqref{eq19.5} it follows that the sequence $\{u_{k_i}\}$ is equivalent to the sequence $\{f_{k_i}\}$, i.e.,
\begin{equation}\label{EQ15}
K^{-1}\Big\|\sum_{i=1}^{\infty} c_if_{k_i}\Big\|_E\le\Big\| \sum_{i=1}^{\infty} c_iu_{k_i}\Big\|_E\le K\Big\|\sum_{i=1}^{\infty} c_if_{k_i}\Big\|_E,
\end{equation} 
and is also complemented in $E.$ Moreover, since $E$ is a separable space, then there is a bounded projection $Q:\,E\to E$ of the form:
$$
Qx:=\sum_{i=1}^\infty \langle x,v_i\rangle u_{k_i},$$ 
where $v_i\in E'$, $\langle u_{k_i},v_i\rangle =1$ and $\langle u_{k_i},v_j\rangle =0$ if $i\ne j.$

Let us show that the linear operator
$$
Tx:=\sum_{i=1}^\infty \langle Lx,v_i \rangle x_{k_i}$$ is a bounded projection in
$W$. In fact, applying successively inequalities \eqref{EQ12}, \eqref{EQ15} and \eqref{eq19}, for any $x\in W$ we obtain
\begin{eqnarray*}
\|Tx\|_{W} &\le& C\Big\|\sum_{i=1}^\infty
\langle Lx,v_i\rangle f_{k_i}\Big\|_E\le CK\Big\|\sum_{i=1}^\infty
\langle Lx,v_i\rangle u_{k_i}\Big\|_E\\ &=&
CK\|Q(Lx)\|_E\le CK\|Q\|\|Lx\|_E\\
&\le&
CK\|Q\|\|Sx\|_E=CK\|Q\|\|x\|_{W}.
\end{eqnarray*}
Moreover, for every $i=1,2,\dots$
$$
Tx_{k_i}=\sum_{i=1}^\infty \langle Lx_{k_i},v_i\rangle x_{k_i}= \sum_{i=1}^\infty
\langle u_{k_i},v_i\rangle x_{k_i}=x_{k_i}.$$ Therefore, $[x_{k_i}]$ as the image of the bounded projection $T$ is a complemented subspace of $W$, and thus the proof of the implication $(b)\Rightarrow (c)$ is complete.

$(c)\Rightarrow (a)$. Again, by Proposition~\ref{Th0}, there is a sequence $(A_i)_{i=1}^{\infty}$ of pairwise disjoint measurable subsets of $[0,1]$ such that the sequence
$\left\{\frac{\chi_{A_i}}{\|\chi_{A_i}\|_W}\right\}_{i=1}^\infty$
is equivalent in $W$ to the basis $\{e_i\}_{i=1}^{\infty}$ of $E$. Setting $y_i:=\frac{\chi_{A_i}}{\|\chi_{A_i}\|_W}$, we see that the linear operator $H:\,E\to W$ defined by $H(e_i):=y_i$, $i=1,2,\dots$, is an isomorphic embedding of $E$ into $W$ with the image $[y_i]$.

Let $M_i$, $i=1,2,\dots$, be a sequence of pairwise disjoint subsets of $\mathbb{N}$, $M_i\ne\varnothing$, $i=1,2,\dots$ and let $w_i=\sum_{k\in M_i}a_k^ie_k$, where $a_k^i\in\mathbb{R}$, $k\in M_i$, $i=1,2,\dots$ It is sufficient to find a subsequence $\{w_{i_j}\}\subset \{w_i\}$, which is complemented in $E$.

Clearly, the functions $g_i:=\sum_{k\in M_i}a_k^iy_k$, $i=1,2,\dots$, are pairwise disjoint and the sequence $\{g_i\}$ is equivalent in $W$ to the sequence $\{w_i\}$. By hypothesis, there is a subsequence $\{g_{i_j}\}\subset \{g_i\}$, which is complemented in $W$. Let $P$ be a bounded projection in $W$ whose image is $[g_{i_j}]$. Now, the operator $Q:=H^{-1}PH$ is bounded in $E$ and it is easy to check that $Q$ is a projection whose image coincides with $[w_{i_j}]$. Thus, $[w_{i_j}]$ is a complemented subspace in $E$, and the theorem is proved.
\end{proof}

Let $1< p<\infty$. It is well known that there are Orlicz sequence spaces, which contain a block basis equivalent to the unit vector basis of $l_p$ but do not have any complemented subspace isomorphic to $l_p$ \cite[Examples~4.c.6 and~4.c.7]{LTbook1}, (see also \cite{LT2}and \cite{LT3}). Observe also that other (explicitly defined) examples of Orlicz sequence spaces without complemented copies of $l_p$ have been given in \cite[Theorem~1.6]{HR-S-89} and \cite[Theorem~3.4]{Kal-90}. Combining this together with the implication $(c)\Rightarrow (a)$ of Theorem~\ref{Th3.5}, we can construct r.i. spaces without the DC property. 

\begin{corollary}\label{cor5}
There exist Orlicz sequence spaces $l_N$ such that the r.i. space $W_{l_N}^K(X_0,X_1)$ is not DC.
\end{corollary}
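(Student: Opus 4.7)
The plan is to derive the corollary from the equivalence $(b)\Leftrightarrow(c)$ of Theorem~\ref{Th3.5}, used in its contrapositive form: if there is a block basis of $\{e_n\}_{n=1}^\infty$ in $E$ none of whose subsequences is complemented, then $W_E^K(X_0,X_1)$ fails to be DC.

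First, I would fix a couple $X_0,X_1$ meeting the standing hypotheses of Section~\ref{LP}. The concrete choice $X_0=L_q[0,1]$, $X_1=L_r[0,1]$ with $1<r<q<\infty$ works: the strict embedding follows from $\|\chi_A\|_{L_r}/\|\chi_A\|_{L_q}=m(A)^{1/r-1/q}\to 0$, and \eqref{EQ0}–\eqref{EQ1} hold after trivial normalizations. Next, invoking the references in the paragraph preceding the corollary (\cite[Examples~4.c.6 and 4.c.7]{LTbook1}, and also \cite[Theorem~1.6]{HR-S-89} or \cite[Theorem~3.4]{Kal-90}), I would select an Orlicz function $N$ and an exponent $p\in(1,\infty)$ with the following property: the canonical (normalized, 1-unconditional) unit vector basis $\{e_n\}_{n=1}^\infty$ of the Orlicz sequence space $l_N$ admits a normalized block basis $\{u_n\}_{n=1}^\infty$ equivalent in $l_N$ to the unit vector basis of $l_p$, while $l_N$ itself contains no complemented subspace isomorphic to $l_p$.

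The decisive elementary step is then the remark that no subsequence of $\{u_n\}$ can be complemented in $l_N$. Indeed, any subsequence $\{u_{n_k}\}$ of a block basis equivalent to the $l_p$ unit vector basis is again equivalent to it (with the same constants). Therefore, if $[u_{n_k}]$ were complemented in $l_N$, we would obtain a complemented isomorphic copy of $l_p$ inside $l_N$, contradicting the choice of $N$. Hence the Banach lattice $E:=l_N$, ordered by $\{e_n\}_{n=1}^\infty$, fails condition (b) of Theorem~\ref{Th3.5}, and the implication $(c)\Rightarrow(b)$ (read contrapositively) delivers that $W:=W_{l_N}^K(X_0,X_1)$ is not DC, which is the assertion of the corollary.

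I do not foresee any real obstacle beyond citing the Orlicz example correctly; the bulk of the work has been absorbed into Theorem~\ref{Th3.5}. The routine checks that remain are purely bookkeeping: namely, that the canonical basis of an Orlicz sequence space is automatically 1-unconditional and normalized, so that $l_N$ fits the framework of Section~\ref{LP}, and that the absence of a complemented $l_p$-copy in $l_N$ does indeed forbid \emph{all} subsequences of $\{u_n\}$ from being complemented, which is precisely the elementary span argument given above.
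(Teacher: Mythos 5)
Your proposal is correct and follows essentially the same route as the paper: invoke the known Orlicz sequence spaces containing a block basis equivalent to the unit vector basis of $l_p$ but no complemented copy of $l_p$, note (via the symmetry of the $l_p$ basis) that no subsequence of such a block basis can span a complemented subspace, and conclude from the equivalence in Theorem~\ref{Th3.5} that $W_{l_N}^K(X_0,X_1)$ is not DC. The only cosmetic difference is that the paper quotes the implication $(c)\Rightarrow(a)$ while you use $(c)\Rightarrow(b)$ contrapositively, which is immaterial since the theorem asserts the equivalence of all three conditions.
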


Taking for $E$ a DH (resp. DC) Banach lattice ordered by a normalized 1-unconditional basis, according to Theorem~\ref{Th3} (resp. Theorem~\ref{Th3.5}), we get a DH (resp. DC) r.i. space $W_E^K(X_0,X_1)$ whenever r.i. spaces $X_0$ and $X_1$ are such that $X_0\subset X_1$ and this embedding is strict. In particular, we have

\begin{corollary}\label{lp-cor}
Let $1\le p\le\infty$. Then, $W_{l_p}^K(X_0,X_1)$ ($W_{c_0}^K(X_0,X_1)$ if $p=\infty$) is a $p$-DH and DC r.i. space.
\end{corollary}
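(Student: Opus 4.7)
The plan is to apply the implications $(a)\Rightarrow(c)$ of Theorem~\ref{Th3} and Theorem~\ref{Th3.5} with $E=l_p$ (or $E=c_0$ when $p=\infty$), viewed as a Banach lattice ordered by its canonical normalized 1-unconditional unit vector basis $\{e_n\}$. Once this is set up, all that remains is to verify that $l_p$ (resp. $c_0$) is itself $p$-DH and DC; both are classical facts and the assumptions on $X_0,X_1$ stated at the beginning of Section~\ref{LP} (strict embedding) are already in force.

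For the $p$-DH property, I would take a normalized disjoint sequence $\{u_k\}$ in $l_p$, i.e., $u_k=\sum_{n\in M_k}a_n^k e_n$ with pairwise disjoint supports $M_k\subset\mathbb{N}$ and $\|u_k\|_{l_p}=1$. The identity $\|\sum_k c_ku_k\|_{l_p}^p=\sum_k|c_k|^p$ (and its $\sup$-analogue in $c_0$) shows that $\{u_k\}$ is isometrically equivalent to the canonical basis of $l_p$ (resp.\ $c_0$), so no passage to a subsequence is needed.

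For the DC property, given such $\{u_k\}$ I would produce an explicit norm-one projection onto $[u_k]$ by biorthogonal functionals. For $1\le p<\infty$, set $v_k:=\sum_{n\in M_k}|a_n^k|^{p-1}\operatorname{sgn}(a_n^k)e_n^*\in l_{p'}$, so that $\|v_k\|_{l_{p'}}=1$ and $\langle u_k,v_k\rangle=1$; for $c_0$, pick any $n_k\in M_k$ at which $|a_{n_k}^k|=1$ is attained and set $v_k:=\operatorname{sgn}(a_{n_k}^k)e_{n_k}^*$. Then $Px:=\sum_k\langle x,v_k\rangle u_k$ is a norm-one projection in $l_p$ (resp. $c_0$) onto $[u_k]$: boundedness follows by H\"older's inequality together with the pairwise disjointness of the supports $M_k$, and the biorthogonality relations make $P$ a projection.

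Combined with Theorems~\ref{Th3} and~\ref{Th3.5}, these observations give the stated conclusions for $W_{l_p}^K(X_0,X_1)$ (resp. $W_{c_0}^K(X_0,X_1)$). There is essentially no obstacle here: the corollary is just the specialization of the two main theorems of the section to the simplest possible choice of parameter lattice, and the required verifications for $l_p$ and $c_0$ are textbook computations.
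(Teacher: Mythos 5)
Your proposal is correct and follows exactly the paper's route: the corollary is obtained there by the one-sentence observation preceding it, namely that Theorems~\ref{Th3} and~\ref{Th3.5} transfer the $p$-DH and DC properties from the parameter lattice $E$ to $W_E^K(X_0,X_1)$, applied with $E=l_p$ (resp.\ $c_0$). Your explicit verifications that disjoint normalized sequences in $l_p$ (resp.\ $c_0$) are isometrically equivalent to the unit vector basis and span $1$-complemented subspaces are the standard facts the paper leaves implicit, and they check out.
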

 
Another interesting class of parameters enjoying DH and DC properties is formed by separable Banach sequence spaces having the so-called (RSP) and (LSP) properties.

Given a sequence $x=(x(k))_{k=1}^{\infty}$ of reals, the
\textit{support} of $x$ is the set $\supp\, x:=\,\{k\in{\mathbb {N}}:\,x(k)\ne 0\}.$
If $A\subset{\mathbb{N}}$ and $B\subset{\mathbb{N}}$ then the inequality $A<B$ means
that $a<b$ for arbitrary $a\in A,$ $b\in B.$ Let
$\{x_n\}_{n=1}^m$ and $\{y_n\}_{n=1}^m$ be two families of sequences. The
pair $(x_n,y_n)_{n=1}^m$ is \textit{interlaced} if supports of the sequences $x_n$, $y_n$, $n=1,2,\dots,m$, are finite and also
$$
\supp\,x_n<{\supp\,y_n}\;(1\le n\le m),\;\;\supp\,y_n
<{\supp\,x_{n+1}}\;(1\le n\le{m-1}).$$ We say that a Banach sequence
space $E$ has \textit{the right-shift property} (RSP) if there
exists $C_{RS}>0$ such that for any interlaced pair
$(x_n,y_n)_{n=1}^m$ with $||y_n||_E\le{||x_n||_E}=1$, $1\le n\le m$,
and for all $a_n\in\mathbb{R}$ we have
$$
\Big\|\sum_{n=1}^m a_n y_n\Big\|_E\,\le{\,C_{RS}\Big\|\sum_{n=1}^m
a_n x_n\Big\|_E}.$$ Analogously, $E$ has \textit{the left-shift
property} (LSP) if for some $C_{LS}>0$, any interlaced pair
$(x_n,y_n)_{n=1}^m$, $||x_n||_E\le{||y_n||_E}=1$  $(1\le n\le m)$
and all $a_n\in\mathbb{R}$
$$
\Big\|\sum_{n=1}^m a_n x_n\Big\|_E\,\le{\,C_{LS}\Big\|\sum_{n=1}^m
a_n y_n\Big\|_E}.$$

\begin{theorem}\label{Th3.6}

(i) If the space $E$ has at least one of the properties (RSP) or (LSP), then $W:=W_E^K(X_0,X_1)$ is a DH r.i. space.

(ii) If $E$ has both properties (RSP) and (LSP), then $W$ is a DC r.i. space.
 \end{theorem}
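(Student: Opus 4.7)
The plan is to reduce both statements, via Theorem~\ref{Th3} and Theorem~\ref{Th3.5} respectively, to assertions purely about block bases of $\{e_n\}_{n=1}^\infty$ in $E$: for (i), that any two normalized block bases contain subsequences equivalent in $E$; for (ii), that any normalized block basis contains a subsequence whose closed linear span is complemented in $E$. Suppose first that $E$ has (RSP); the (LSP) case being entirely symmetric, this suffices for (i). Given normalized block bases $\{x_n\}$ and $\{y_n\}$ of $\{e_n\}$, I would extract subsequences $\{u_k\}\subset\{x_n\}$ and $\{v_k\}\subset\{y_n\}$ with strictly interlaced supports $\supp u_1<\supp v_1<\supp u_2<\supp v_2<\cdots$. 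Applying (RSP) to each of the interlaced pairs $(u_k,v_k)_k$ and $(v_k,u_{k+1})_k$ gives
$$\Big\|\sum_k a_k v_k\Big\|_E\le C_{RS}\Big\|\sum_k a_k u_k\Big\|_E\quad\text{and}\quad\Big\|\sum_k a_k u_{k+1}\Big\|_E\le C_{RS}\Big\|\sum_k a_k v_k\Big\|_E.$$
To upgrade these one-sided dominations into a two-sided equivalence, I would pass to a further subsequence on which the right-shift $u_k\mapsto u_{k+1}$ becomes a two-sided isomorphism, obtainable via a Brunel--Sucheston spreading-model extraction. Chaining with the two dominations above then yields the desired equivalence $\{u_k\}\sim\{v_k\}$ in $E$.

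For (ii), given a normalized block basis $\{u_k\}$ with $u_k=\sum_{i\in B_k}a_i^{(k)}e_i$ and pairwise disjoint supports $B_k$, I would select indices $n_k$ satisfying $\max B_k<n_k<\min B_{k+1}$. Since $\|u_k\|_E=\|e_{n_k}\|_E=1$ and the pair $(u_k,e_{n_k})_k$ is interlaced, applying (RSP) and (LSP) to it in turn produces the two-sided equivalence $\{u_k\}\sim\{e_{n_k}\}$ in $E$. To obtain a bounded projection onto $[u_k]$, I would invoke the Hahn--Banach theorem together with the 1-unconditionality of $\{e_n\}$ to choose functionals $v_k^*\in E^*$ supported in $B_k$ with $\|v_k^*\|_{E^*}\le 1$ and $v_k^*(u_k)=1$, and set
$$Qf:=\sum_{k=1}^\infty v_k^*(f)\,u_k,\qquad f\in E.$$
The identity $Qu_j=u_j$ follows from the disjointness of the supports of $v_k^*$ and $u_j$ for $k\ne j$, and the boundedness of $Q$ follows by combining the equivalence $\{u_k\}\sim\{e_{n_k}\}$ with the estimate $|v_k^*(f)|\le\|P_{B_k}f\|_E$ and (RSP) applied to the interlacing of the normalized block basis $\{P_{B_k}f/\|P_{B_k}f\|_E\}_k$ with $\{e_{n_k}\}_k$; a final appeal to 1-unconditionality bounds $\|\sum_k P_{B_k}f\|_E$ by $\|f\|_E$.

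The main obstacle is the upgrade step in (i): (RSP) alone (or (LSP) alone) only produces one-sided dominations on interlaced pairs, and converting these to genuine two-sided equivalence requires additional shift-invariance for a suitably chosen subsequence. This is where the spreading-model (or Ramsey-type) extraction enters, and verifying that such an extraction is compatible with the (RSP) dominations is the technical heart of the argument; once this is in place, the chain $\{u_k\}\sim\{u_{k+1}\}\preceq\{v_k\}\preceq\{u_k\}$ closes the proof of (i), and part (ii) then reduces to the projection construction sketched above.
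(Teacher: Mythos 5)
Your reduction of both parts to statements about block bases of $\{e_n\}$ via Theorems~\ref{Th3} and~\ref{Th3.5} is exactly the paper's strategy, and your part (ii) is essentially correct: applying (RSP) and (LSP) to the \emph{same} interlaced pair $(u_k,e_{n_k})$ does yield a two-sided equivalence, and your explicit projection $Q$ (functionals supported in the blocks $B_k$, boundedness via the equivalence with $\{e_{n_k}\}$, (RSP) applied against the normalized blocks of $f$, and $1$-unconditionality) amounts to a self-contained proof of the fact that the paper simply imports from \cite[Lemma~2.6]{Kal}, namely that (RSP) together with (LSP) forces every block basis to span a complemented subspace. That part is fine and arguably more informative than the citation.

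Part (i), however, has a genuine gap, and it sits exactly where you place ``the technical heart''. From (RSP) you correctly obtain the dominations $\{v_k\}\preceq\{u_k\}$ and $\{u_{k+1}\}\preceq\{v_k\}$, and you correctly observe that they close into an equivalence only if the right shift $u_k\mapsto u_{k+1}$ is a two-sided isomorphism on $[u_k]$ after a further extraction. A Brunel--Sucheston extraction does not deliver this: it gives convergence of $\bigl\|\sum_{i=1}^{k}a_iu_{n_i}\bigr\|$ for each \emph{fixed} length $k$ and fixed coefficients as $n_1\to\infty$, with no uniformity in $k$, hence no equivalence in $E$ between a subsequence and its own shift; producing a block basis equivalent to its shifted subsequence is essentially an instance of the homogeneity you are trying to prove, so the step is both unestablished and close to circular. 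The paper's proof never inverts the shift: it applies the one-sided hypothesis \emph{twice, to two interlacings in opposite positions} --- first to a pair $(f_i',g_i')$ with the $f$'s to the left of the $g$'s, and then, after a further extraction, to a pair $(g_i'',f_i'')$ with the $g$'s to the left of the $f$'s --- so that both required dominations come directly from (LSP) (resp.\ (RSP)) rather than from any shift-invariance. If you want to salvage your argument, that is the move to make; as written, part (i) does not go through.
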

\begin{proof}

(i) Assume that the space $E$ possesses the (LSP)-property. According to Theorem~\ref{Th3}, it is sufficient to check that  every two normalized block bases $\{f_{k}\}_{k=1}^{\infty}$ and $\{g_{k}\}_{k=1}^{\infty}$ of $\{e_n\}_{n=1}^{\infty}$ contain subsequences equivalent in $E$. Since  supports of the elements $f_k$ and $g_k$ are finite, we can find subsequences $\{f_{i}'\}_{i=1}^{\infty}\subset \{f_{k}\}$ and $\{g_{i}'\}_{i=1}^{\infty}\subset \{g_{k}\}$ such that the pair $(f_i',g_i')_{i=1}^m$ is interlaced for each $m=1,2,\dots$.
Therefore, since $\|f_{i}'\|_E=\|g_{i}'\|_E=1$, $i\in\mathbb{N}$, then applying the (LSP)-property of $E$, for all $a_i\in\mathbb{R}$ we have 
$$
\Big\|\sum_{i=1}^m a_i
f_{i}'\Big\|_E\le{C_{RS}\Big\|\sum_{i=1}^m a_i g_{i}'\Big\|_E}.$$
Hence, if the series $\sum_{i=1}^\infty a_i g_{i}'$ converges in $E$, we get
$$
\Big\|\sum_{i=1}^\infty a_i
f_{i}'\Big\|_E\le{C_{RS}\Big\|\sum_{i=1}^\infty a_i g_{i}'\Big\|_E}.$$

Similarly, there are further subsequences
$\{f_{j}''\}_{j=1}^{\infty}\subset \{f_{j}'\}_{j=1}^{\infty}$ and $\{g_{j}''\}_{j=1}^{\infty}\subset \{g_{j}'\}_{j=1}^{\infty}$ such that
the pair $(g_i'',f_i'')_{i=1}^m$ is interlaced for each $m=1,2,\dots$.
Hence, in the same way as above, we infer
$$
\Big\|\sum_{i=1}^\infty a_i
g_{i}''\Big\|_E\le{C_{RS}\Big\|\sum_{i=1}^\infty a_i f_{i}''\Big\|_E}.$$
Thus, $\{f_{j}''\}_{j=1}^{\infty}$ and $\{g_{j}''\}_{j=1}^{\infty}$ are equivalent in $E$. In the case when $E$ has (RSP), the proof follows by the same lines.

(ii). If the space $E$ has both the properties (RSP) and (LSP), then from \cite[Lemma~2.6 and subsequent remark]{Kal} it follows that each block basis of $\{e_n\}_{n=1}^\infty$ is complemented in $E$. Therefore, applying Theorem~\ref{Th3.5}, we arrive at the desired result.
\end{proof}

It is well known that Tsirelson's space $T$ possesses the properties (RSP) and (LSP) (see \cite{CaSh} and \cite{FJ}). Since $T$ does not contain subspaces isomorphic to $l_p$, $1\le p<\infty$ or $c_0$, from 
Proposition~\ref{Theor1} and Theorem~\ref{Th3.6} we obtain

\begin{corollary}\label{Th4}
$W_T^K(X_0,X_1)$ is a DH and DC r.i. space, in which no sequence of pairwise disjoint functions is equivalent to the unit vector basis of $l_p$, $1\le p<\infty$ or $c_0$. In particular, $W_T^K(X_0,X_1)$ is not a $p$-DH space for each $1\le p\le\infty$.
\end{corollary}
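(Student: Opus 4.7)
The plan is to combine the structural result of Theorem~\ref{Th3.6} with the distinguishing feature of Tsirelson's space $T$: it possesses both the (RSP) and (LSP) properties (\cite{CaSh}, \cite{FJ}), yet it contains no isomorphic copy of $l_p$ for any $1\le p<\infty$ nor of $c_0$. All three assertions of the corollary then follow essentially by a routing through Propositions~\ref{Th0} and~\ref{Theor1}.

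First, I would apply Theorem~\ref{Th3.6} directly with $E=T$: since $T$ has both shift properties, the space $W_T^K(X_0,X_1)$ is simultaneously DH and DC. For the disjointness/$l_p$ assertion, let $\{x_n\}_{n=1}^\infty$ be any normalized pairwise disjoint sequence in $W:=W_T^K(X_0,X_1)$. By Proposition~\ref{Theor1}, there is a subsequence $\{x_{n_i}\}_{i=1}^\infty$ that is equivalent in $W$ to some block basis $\{f_i\}_{i=1}^\infty$ of the canonical basis $\{e_n\}_{n=1}^\infty$ of $T$. If some further subsequence of $\{x_n\}$ were equivalent to the unit vector basis of $l_p$ ($1\le p<\infty$) or of $c_0$, then the corresponding subsequence of $\{f_i\}$ — still a block basis of $\{e_n\}$ — would span in $T$ a subspace isomorphic to $l_p$ or $c_0$, contradicting the defining property of Tsirelson's space.

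For the final clause — that $W$ is not $p$-DH for any $1\le p\le\infty$ — I would invoke Proposition~\ref{Th0}, which produces an explicit pairwise disjoint sequence $\left\{\chi_{A_k}/\|\chi_{A_k}\|_W\right\}_{k=1}^\infty$ in $W$ equivalent to the basis $\{e_n\}$ of $T$ itself. If $W$ were $p$-DH, then some subsequence of this disjoint sequence would be equivalent to the unit vector basis of $l_p$ (or $c_0$ if $p=\infty$); transferring the equivalence back through the isomorphism with $\{e_n\}$, the corresponding subsequence of the Tsirelson basis, being a block basis of itself, would span a subspace of $T$ isomorphic to $l_p$ or $c_0$ — again a contradiction.

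There is no serious obstacle: the only delicate point is that Proposition~\ref{Theor1} delivers equivalence to \emph{some} block basis of $\{e_n\}$ rather than to $\{e_n\}$ itself, so to rule out the $p$-DH property one genuinely needs the companion Proposition~\ref{Th0} which realizes the full basis $\{e_n\}$ by disjoint characteristic functions. Together these two propositions make the transfer between block bases in $T$ and disjoint sequences in $W$ completely symmetric, which is exactly what forces the pathology of $T$ to propagate to $W_T^K(X_0,X_1)$.
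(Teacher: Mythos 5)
Your proposal is correct and follows essentially the same route as the paper, which derives the corollary directly from Theorem~\ref{Th3.6} (using that $T$ has both (RSP) and (LSP)) together with Proposition~\ref{Theor1} and the fact that $T$ contains no copy of $l_p$ or $c_0$. The only deviation is your detour through Proposition~\ref{Th0} for the final clause, which is harmless but not actually needed: the second assertion already rules out $p$-DH for every $p$, since any subsequence of a normalized disjoint sequence is itself a disjoint sequence and hence cannot be equivalent to the unit vector basis of $l_p$ or $c_0$.
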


A more special choice of the spaces $X_0$ and $X_1$ allows to construct a DH and DC r.i. space, which is even not isomorphic to any $p$-DH space (cf. \cite{T}).

\begin{corollary}\label{Th5}
Let $X_0$ and $X_1$ be r.i. spaces such that $X_0\subset X_1$ and this embedding is strict. Moreover, suppose that $X_1\not\supset (Exp L^2)_0$, where $(Exp L^2)_0$ is the separable part of the Orlicz space $Exp L^2$ generated by the function $e^{u^2}-1$. Then, $W_T^K(X_0,X_1)$ is a DH and DC r.i. space, which contains no subspace isomorphic to  $l_p$, $1\le p<\infty$, or $c_0$. In particular, the space $W_T^K(X_0,X_1)$ is not isomorphic to a $p$-DH Banach lattice for every $p\in [1,\infty]$.
\end{corollary}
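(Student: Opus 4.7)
The plan is to reduce the absence of $l_p$- and $c_0$-subspaces in $W:=W_T^K(X_0,X_1)$ to the absence of disjointly supported copies, for which Corollary~\ref{Th4} already applies.

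First, by Corollary~\ref{Th4}, $W$ is a DH and DC r.i.\ space in which no normalized sequence of pairwise disjoint functions is equivalent to the unit vector basis of $l_p$, $1\le p<\infty$, or of $c_0$. Next, the general interpolation inclusion $W\subset X_0+X_1$ combined with the assumption $X_0\subset X_1$ yields $W\hookrightarrow X_1$ continuously, so the hypothesis $X_1\not\supset (Exp L^2)_0$ forces $W\not\supset (Exp L^2)_0$.

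The heart of the argument would be a classical dichotomy of Hernandez-Semenov type (cf.\ \cite{HS}): if $X$ is an r.i.\ space on $[0,1]$ with $X\not\supset (Exp L^2)_0$, then every subspace of $X$ isomorphic to $l_p$ ($1\le p<\infty$) or to $c_0$ must contain a subsequence equivalent in $X$ to a sequence of pairwise disjoint functions. Applied to $W$, such a subspace would produce a disjointly supported sequence in $W$ equivalent to the $l_p$- or $c_0$-basis, contradicting the first step. Hence $W$ admits no copy of $l_p$, $1\le p<\infty$, or $c_0$. The principal obstacle is to locate (or, if necessary, derive) this dichotomy in the exact generality required; morally it should follow from a Kadec-Pelczynski style splitting together with the Rodin-Semenov theorem characterising $(Exp L^2)_0$ as the smallest r.i.\ space in which the Rademacher sequence spans a copy of $l_2$.

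For the ``in particular'' assertion, suppose $W$ were isomorphic to some $p$-DH Banach lattice $Y$ with $p\in[1,\infty]$. Since $Y$ is infinite-dimensional it contains an infinite normalized disjoint sequence, and the $p$-DH property produces a subsequence equivalent to the unit vector basis of $l_p$ (or of $c_0$ when $p=\infty$). This furnishes an $l_p$- (or $c_0$-) subspace of $Y$, and hence of $W$, contradicting what has just been established.
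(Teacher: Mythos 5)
Your overall architecture coincides with the paper's: transfer the hypothesis to $W:=W_T^K(X_0,X_1)$ via $W\subset X_0+X_1=X_1$, and reduce the absence of $l_p$- and $c_0$-copies to the absence of disjointly supported ones, which Corollary~\ref{Th4} settles. However, the step you yourself label the ``principal obstacle'' is exactly where the proof lives, and your sketch for it does not close the gap. The Kadec--Pelczynski alternative \cite{KadPel} only yields a dichotomy for an infinite-dimensional subspace $Y\subset W$: either $Y$ contains a sequence equivalent in $W$ to a sequence of pairwise disjoint functions, or the norms of $W$ and $L_1$ are equivalent on $Y$. To land in the first horn you must exclude the second, and the Rodin--Semenov characterisation of $(Exp L^2)_0$ does not do this by itself. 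The paper's argument is: if the $W$- and $L_1$-norms were equivalent on some infinite-dimensional subspace, then, precisely because $W\not\supset (Exp L^2)_0$, Theorem~2 of \cite{AHS} produces a sequence of \emph{pairwise disjoint} functions on whose span the two norms are equivalent; and this contradicts Novikov's theorem \cite{Nov} (see also \cite[Corollary~3]{A99}) that for a r.i. space $W\ne L_1$ the inclusion $W\subset L_1$ is disjointly strictly singular. Only after this preliminary step does Kadec--Pelczynski apply to \emph{every} infinite-dimensional subspace (in particular to a putative copy of $l_p$ or $c_0$), and Corollary~\ref{Th4} then finishes the proof. Neither of these two ingredients appears in your sketch; note that the second horn can be dismissed cheaply only for $c_0$ and $l_p$ with $p>2$ (which do not embed in $L_1$), whereas for $1\le p\le 2$ the space $l_p$ does embed in $L_1$ with equivalent norms, so the AHS--Novikov step is genuinely needed there.

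Your final paragraph on the ``in particular'' assertion is correct and agrees with the paper: an infinite-dimensional $p$-DH Banach lattice contains a normalized disjoint sequence with a subsequence spanning $l_p$ (or $c_0$), which would transfer to $W$ under an isomorphism.
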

\begin{proof}
Put $W:=W_T^K(X_0,X_1)$.
At first, we observe that the norms of $W$ and $L_1$ are not equivalent on any infinite dimensional subspace of $W$. Indeed, since $W\subset X_1$, then from the condition it follows that $W\not\supset (Exp L^2)_0$. Therefore, assuming the contrary, by \cite[Theorem~2]{AHS}, we can find a sequence $\{x_k\}$ of pairwise disjoint functions from $W$ such that the norms of $W$ and $L_1$ are equivalent on the subspace $[x_k]$. Since $W$ is a r.i. space, $W\ne L_1$, this is a contradiction with  a result from \cite{Nov} (see also \cite[Corollary~3]{A99}). 

Now, let $Y$ be an arbitrary infinite dimensional subspace of $W$.  According to the preceding observation, by Kadec-Pelczynski alternative \cite{KadPel}, $Y$ contains a sequence, which is equivalent in $W$ to some sequence of pairwise disjoint functions. Clearly (see Corollary~\ref{Th4}), the latter sequence cannot be equivalent to the unit vector basis of $l_p$, $1\le p<\infty$, or $c_0$. Hence, $Y$ is not isomorphic to $l_p$ for any $1\le p<\infty$ or $c_0$. As an immediate consequence, we deduce that $W$ is not isomorphic to a $p$-DH Banach lattice for each $p\in [1,\infty]$.
      
\end{proof}

\section{\protect \medskip A characterization of $L_p$-spaces on $(0,\infty)$ via DH properties}

In this concluding part of the paper, we turn to the case of r.i. spaces on $(0,\infty)$. Let $1<p<\infty$. In Theorem~5.2 of the paper \cite{FHSTT}, one can find an example of a $p$-DH Orlicz space $L_M(0,\infty)$ whose dual is not DH. In fact, known results on subspaces generated  by translations in r.i. spaces due to Hernandez and Semenov \cite{HS} easily imply the following characterization of $L_p$-spaces. 
\begin{theorem}\label{main2}
Let $X$ be a reflexive r.i. space on $(0,\infty)$. The following conditions are equivalent:

(a) $X$ and $X^*$ are DH;

(b) $X$ and $X^*$ are DC;

(c) $X=L_p(0,\infty)$ for some $1<p<\infty$.
\end{theorem}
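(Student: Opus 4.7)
The plan. The direction (c) $\Rightarrow$ (a), (b) is classical: in $L_p(0,\infty)$ with $1<p<\infty$, every normalized pairwise disjoint sequence $\{f_n\}$ satisfies $\|\sum a_n f_n\|_p = (\sum |a_n|^p)^{1/p}$, so it is isometrically equivalent to the unit vector basis of $l_p$ (hence $L_p$ is $p$-DH, in particular DH), and its closed linear span is the range of the norm-one band projection onto the union of the supports (hence $L_p$ is DC). The reflexive dual $L_p^* = L_{p'}$ inherits both properties.

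The substantive content is (a) $\Rightarrow$ (c) and (b) $\Rightarrow$ (c), which I propose to prove in parallel using the Hernandez-Semenov analysis of translate-generated subspaces from \cite{HS}. Fix a normalized nonnegative $f \in X$ with $\supp f \subset [0,1]$, and for a sufficiently lacunary sequence $x_n \to \infty$ consider the translates $f_n(t) := f(t - x_n)$. These are pairwise disjoint equimeasurable functions in $X$. By \cite{HS}, $[f_n]$ is isomorphic in $X$ to an Orlicz sequence space $l_{N_{f,X}}$ for an Orlicz function $N_{f,X}$ canonically determined by $f$ and the r.i. norm of $X$; moreover, complementability of $[f_n]$ in $X$ corresponds to a canonical property of $N_{f,X}$, and under reflexivity of $X$ the translate-Orlicz function of $X^*$ is equivalent to the complementary function $N_{f,X}^*$.

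Suppose $X$ and $X^*$ are both DH (hypothesis (a)). Applying DH to translates of two distinct normalized compactly supported $f, g \in X$, I deduce that $l_{N_{f,X}}$ and $l_{N_{g,X}}$ contain equivalent subsequences, whence $N_{f,X} \sim N_{g,X}$ near zero. Thus, up to equivalence, $N_{f,X}$ is independent of $f$; write $N_X$ for this common Orlicz function. Repeating the argument in $X^*$ (which is again a reflexive r.i.\ space) and invoking the duality above, $N_X^*$ is likewise (up to equivalence) independent of $f$. The simultaneous rigidity of both $N_X$ and $N_X^*$ as $f$ varies -- in particular as one rescales and redistributes the mass of $f$ -- forces $N_X(u) \sim u^p$ for some $1 < p < \infty$, with $p \notin \{1,\infty\}$ forced by reflexivity of $X$. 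Once $N_{f,X} \sim u^p$ for every admissible $f$, the characterization half of \cite{HS} identifies $X$ with $L_p(0,\infty)$ as r.i.\ spaces with equivalent norms. The chain (b) $\Rightarrow$ (c) proceeds along the same lines with ``equivalence of translate spans'' replaced by ``complementability of translate spans,'' again collapsing the family $\{N_{f,X}\}$ to a single power function.

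The main obstacle is the rigidity step: one must extract from \cite{HS} the precise faithful dependence of $N_{f,X}$ on $f$ and $X$, and then show that the symmetric constraints coming from DH (resp.\ DC) of $X$ and $X^*$ together force $N_{f,X} \sim u^p$. Reflexivity enters in two places -- to ensure $X^{**} = X$ so that the duality between translate-Orlicz functions of $X$ and of $X^*$ makes sense, and to exclude the endpoint cases $p = 1$ and $p = \infty$, which would correspond to non-reflexive $L_1$ or $L_\infty$.
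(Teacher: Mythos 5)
Your overall instinct --- reduce to the Hernandez--Semenov analysis of translate-generated subspaces --- is the right one, and (c)~$\Rightarrow$~(a),(b) is handled correctly. But the heart of your argument, the ``rigidity step,'' is not a proof: you assert that the simultaneous constancy (up to equivalence) of $N_{f,X}$ and of $N_{f,X^*}$ as $f$ varies ``forces $N_X(u)\sim u^p$,'' and you yourself flag this as the main obstacle. That step is the entire content of the implication, and nothing in the proposal supplies it; moreover the intermediate claims it rests on (that $[f_n]$ is always isomorphic to an Orlicz sequence space $l_{N_{f,X}}$, and that the translate-Orlicz function of $X^*$ is the complementary function of that of $X$) are themselves nontrivial assertions attributed to \cite{HS} without a precise statement, and the second one is not something \cite{HS} hands you in this generality. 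The characterization in \cite{HS} (Theorem~5.4; see also \cite{HSe}) that one actually needs is stated directly in terms of complementability: if for every $a$ supported in $[0,1]$ the span of \emph{all} integer translates $\{\tau_n a\}$ is complemented in $X$ and the analogous statement holds in $X^*$, then $X=L_p$. Using it in that form makes your Orlicz-function bookkeeping unnecessary --- but it also exposes a second gap you do not address: DC only yields a complemented \emph{subsequence} $\{\tau_{n_k}a\}$, whereas the hypothesis of the characterization concerns the full translate sequence. The paper bridges this by an explicit argument exploiting equimeasurability: from a projection $P$ onto $[\tau_{n_k}a]$ one builds, via the isometric ``re-indexing'' embedding $Rx(s)=\sum_k x(s+k-n_k)\chi_{[n_k-1,n_k)}(s)$, a projection $Q$ onto $[\tau_k a]$ with $\|Qx\|_X=\|PRx\|_X\le\|P\|\,\|x\|_X$. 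Some such upgrade is indispensable and is missing from your outline.

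Finally, you propose to run (a)~$\Rightarrow$~(c) in parallel with (b)~$\Rightarrow$~(c) by a DH-based version of the rigidity argument; this creates extra work for no gain, since equivalence of subsequences of translates gives you no handle on complementability and hence no access to the \cite{HS} characterization. The paper instead disposes of (a) at once via the implication (a)~$\Rightarrow$~(b): for reflexive Banach lattices, DH together with reflexivity implies DC by \cite[Proposition~4.10]{FHSTT}. I would recommend restructuring along the cycle (a)~$\Rightarrow$~(b)~$\Rightarrow$~(c)~$\Rightarrow$~(a), quoting that proposition, and concentrating all effort on the subsequence-to-full-sequence upgrade for translates, after which the Hernandez--Semenov theorem finishes the proof without any analysis of Orlicz functions.
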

\begin{proof}
Since $X$ is reflexive, the implication $(a)\Rightarrow (b)$ is an immediate consequence of \cite[Proposition~4.10]{FHSTT}. 

$(b)\Rightarrow (c)$. Let $a\in X$, $a\ne 0$, ${\rm supp}\,a\subset [0,1)$. Define the sequence of translations of the function $a$: $\tau_n a(t)=a(t-n+1)$ if $t\in [n-1,n)$ and $\tau_n a(t)=0$ if $t\not\in [n-1,n)$, $n=1,2,\dots$ We prove that the sequence $\{\tau_{n}a\}$ is complemented in $X$.

By hypothesis, there is  subsequence $\{\tau_{n_k}a\}\subset \{\tau_{n}a\}$, which is complemented in $X$. Since $X$ is separable, the dual space $E^*$ coincides with the K\"{o}the dual. Hence, there is a sequence $\{b_k\}\subset E^*$ such that ${\rm supp}\,b_k\subset [n_k-1,n_k)$, $\int_{n_k-1}^{n_k} b_k \tau_{n_k}a\,ds=1$ and the projection
$$
Px(t):=\sum_{k=1}^\infty\int_{n_k-1}^{n_k} b_k(s)x(s)\,ds\cdot \tau_{n_k}a(t)$$
is bounded in $X$. Let us define the isometric embedding $R:\,X\to X$ by
$$
Rx(s):=\sum_{k=1}^\infty x(s+k-n_k)\chi_{[n_k-1,n_k)}(s),\;\;s>0.$$
Moreover, we put $c_k(s):=b_k(s-k+n_k)$, $k=1,2,\dots$, and
$$
Qx(t):=\sum_{k=1}^\infty\int_{k-1}^{k} c_k(s)x(s)\,ds\cdot \tau_{k}a(t).$$
Clearly, ${\rm supp}\,c_k\subset [k-1,k]$, $k=1,2,\dots$, and
$$
Qx(t)=\sum_{k=1}^\infty\int_{n_k-1}^{n_k} b_k(s)Rx(s)\,ds\cdot \tau_{k}a(t).$$
Therefore, the functions $Qx$ and $PRx$ are equimeasurable and so
$$
\|Qx\|_X=\|PRx\|_X\le \|P\|\|Rx\|_X=\|P\|\|x\|_X.$$
We have also $R\tau_{k}a=\tau_{n_k}a$ for all $k=1,2,\dots$, whence $Q\tau_{k}a=\tau_{k}a$, $k=1,2,\dots$ Thus, $Q$ is a bounded projection whose image coincides with the subspace $[\tau_{k}a]$, and our claim is proved. 

Similarly, we can prove that the sequence $\{\tau_{k}a^*\}$ is complemented in $X^*$ for each $a^*\in X^*$, $a^*\ne 0$, ${\rm supp}\,a^*\subset [0,1]$. Combining this together with Theorem~5.4 from the paper \cite{HS} (see also \cite[Theorem~7]{HSe}), we arrive to $(c)$. 

Since the implication $(c)\Rightarrow (a)$ is obvious, the proof is completed.
\end{proof}

\begin{corollary}\label{last}
If $X$ is a reflexive r.i. space on $(0,\infty)$ such that $X\ne L_p$ for every $1<p<\infty$, then at least one of the spaces $X$ or $X^*$ is not DH. 
\end{corollary}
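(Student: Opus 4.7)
The plan is to derive this corollary as a direct one-line contraposition of Theorem~\ref{main2}. Specifically, the statement is precisely the contrapositive of the implication $(a) \Rightarrow (c)$ in that theorem. So I would simply argue: suppose, toward a contradiction, that both $X$ and $X^*$ are DH. Then condition $(a)$ of Theorem~\ref{main2} holds, and the theorem delivers $X = L_p(0,\infty)$ for some $1<p<\infty$, contradicting the standing hypothesis that $X \neq L_p$ for every such $p$. This closes the argument.

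Because the corollary is formally equivalent to one implication already proved, I do not anticipate any real obstacle at this level; the entire weight of the argument is carried by Theorem~\ref{main2}. The only two points worth flagging, for a reader tracing where the hypotheses are used, are that (i) reflexivity enters only through the $(a) \Rightarrow (b)$ step of Theorem~\ref{main2}, where \cite[Proposition~4.10]{FHSTT} upgrades DH on both $X$ and $X^*$ to DC on both sides, and (ii) the identification of $X$ with an $L_p$-space in the $(b) \Rightarrow (c)$ step comes from the Hernandez-Semenov characterization \cite[Theorem~5.4]{HS} applied to translation sequences, once the shift isometry $R$ has been used to promote the complementation of some subsequence $\{\tau_{n_k}a\}$ to complementation of the full sequence $\{\tau_k a\}$ (and similarly in $X^*$). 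Both of these are already in place, so nothing new needs to be verified for the corollary.
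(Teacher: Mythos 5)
Your proposal is correct and matches the paper's (implicit) argument: the corollary is stated without proof precisely because it is the contrapositive of the implication $(a)\Rightarrow(c)$ of Theorem~\ref{main2}. Your additional remarks on where reflexivity and the Hernandez--Semenov characterization enter are accurate but not needed for the corollary itself.
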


\begin{remark}
An inspection of the proof of Theorem~\ref{main2} shows that DH (resp.  DC) properties of spaces $X$ and $X^*$ are used not to the full extent of their power. Indeed, it is sufficient that one can select "nice" subsequences only from positive integer-valued translations of functions with support in $[0,1]$. In this regard, it is worth to note that just the existence of such a class of sequences of equimeasurable pairwise disjoint functions in the case of infinite measure is a root of the difference of DH and DC properties of r.i. spaces on $(0,\infty)$ and $[0,1]$.
\end{remark}

\begin{remark}
For reflexive Banach lattices ordered by subsymmetric normalized bases  there is an analogue of Theorem~\ref{main2} (see \cite[Proposition~6.9]{FHSTT}).
\end{remark}


\end{document}